\documentclass[11pt,oneside]{article}
\usepackage{authblk}
\usepackage{latexsym}
\usepackage{etoolbox}
\usepackage[arrow,matrix]{xy}
\usepackage{stmaryrd}
\usepackage{amsfonts}
\usepackage{amsmath, amssymb,amscd,amsthm, amsxtra, bbm, mathrsfs}
\usepackage{cases}
\usepackage{cite}
\usepackage{array}
\usepackage{fancyhdr}
\usepackage{ifthen}
\usepackage{verbatim}
\usepackage[T1]{fontenc}
\usepackage{lmodern}
\usepackage{graphics}
\usepackage{setspace}
\usepackage{hyperref}
\usepackage{tikz}
\usepackage{tikz-cd}
\usepackage{pdflscape}

\usepackage[top=2.5cm, bottom=2.5cm, left=2cm, right=2.7cm]{geometry}

\theoremstyle{plain}
\patchcmd{\section}{\centering}{\raggedright}{}{}
\patchcmd{\section}{\normalfont}{\bfseries}{}{}
\patchcmd{\subsection}{\normalfont}{\bfseries}{}{}

\theoremstyle{plain}
\newtheorem{theorem}{Theorem}[section]
\newtheorem{corollary}[theorem]{Corollary}
\newtheorem{lem}[theorem]{Lemma}

\theoremstyle{definition}
\newtheorem{defi}[theorem]{Definition}

\numberwithin{figure}{section}

\theoremstyle{remark}

\numberwithin{equation}{section}
\usepackage{enumitem}
\setlist[enumerate]{left=0pt}
\title{On main eigenvalues of zero-divisor graphs of reduced rings}

\author[1]{Sakshi Jain\thanks{ sakshijain.ah@gmail.com}}
\author[1]{Y. M. Borse\thanks{ ymborse11@gmail.com}}
\author[1]{R. Barabde\thanks{ rushikeshbarbde@gmail.com}}

\affil[1]{Department of Mathematics, Savitribai Phule Pune University, Pune-411007, India}
\begin{document}
	\maketitle

	
	
	
	

	\begin{abstract}
		The problem of characterizing graphs with a prescribed number of main eigenvalues is a long-standing problem in spectral graph theory. Although some constructions are known, only a few produce infinite families of simple connected graphs with exactly $s \ge 2$ main eigenvalues. Zero-divisor graphs form a well-structured class of algebraic graphs whose spectra can be described explicitly using equitable partitions, making them a convenient setting to study main eigenvalues. In this paper, we prove that the zero-divisor graphs of reduced rings provide an infinite family of simple connected graphs with exactly $s$ main eigenvalues, and that certain induced bipartite subgraphs also have exactly $s$ main eigenvalues for any positive integer $s$.

		
		
		\medskip
		\noindent\textbf{Keywords:} Main eigenvalues, zero-divisor graph, quotient matrix.
		
		\medskip
		\noindent\textbf{MSC 2020:} 05C50, 11B39, 11C20.
	\end{abstract}
	
	
	\section{Introduction}
	\noindent
	The eigenvalues of a graph are the eigenvalues of its adjacency matrix. 
	An eigenvalue of a graph is called a \emph{main eigenvalue} if its eigenspace is not orthogonal to the all-one vector $\mathbf{e}$, and a \emph{non-main eigenvalue} otherwise. By the main (respectively, non-main) eigenvalues of a graph, we mean the distinct main (respectively, non-main) eigenvalues. 
	The study of main eigenvalues of a graph has been an active area of research in spectral graph theory since its introduction by Cvetkovi\'c~\cite{cvetkovic2} in 1978.

	
	\par The problem proposed by Cvetković~\cite{cvetkovic2} of characterizing graphs with a prescribed number of main eigenvalues is a long-standing problem and has attracted considerable attention over the years. It follows from the well-known Perron-Frobenius theorem that every connected graph on at least two vertices has at least one main eigenvalue. Graphs with exactly one main eigenvalue are precisely the class of regular graphs \cite{cvetkovic2}. The graphs with exactly two main eigenvalues have been studied in a series of papers \cite{hagos, rowlinson, hou1, hou2, hou3, ghebleh}. However, despite extensive study, a complete characterization of such graphs has yet to be achieved. For graphs with three main eigenvalues, some families of graphs have been constructed in \cite{franca2026}. At the other end of the spectrum, graphs in which all eigenvalues are main, known as controllable graphs, were studied by Godsil\cite{godsil}. Also, graphs with $n-1$ main eigenvalues have been considered more recently by Du et al. \cite{du1,du2}. 
	
	In general, the characterization of graphs with exactly \( s \ge 2 \) main eigenvalues remains largely open. Hagos \cite{hagos} proved the number of main eigenvalues of a graph is equal to the rank of its \textit{walk matrix}. The \textit{walk matrix} of a graph on $n$ vertices with adjacency matrix $A$ is an $n \times n$ matrix with $n$ columns $\mathbf{e}, A\mathbf{e}, A^2\mathbf{e}, \dots, A^{n-1}\mathbf{e}.$ Huang~\cite{huang} extended this result by proving that the number of main eigenvalues of a graph equals the rank of the walk matrix of any quotient of the graph. They further proposed a method to construct connected graphs with exactly s main eigenvalues. However, this construction produces non-simple graphs and fails to yield infinite families of simple graphs.

	
	In this paper, we provide two infinite families of  graphs arising from zero-divisor graphs of certain classes of reduced rings, including Boolean rings, with exactly \(s\) main eigenvalues. Zero-divisor graphs are an important class of algebraic graphs that link ring-theoretic structure with combinatorial and spectral properties. For a commutative ring $R$ with unity, the zero-divisor graph $\Gamma(R)$ is defined as the simple graph whose vertex set consists of all nonzero zero-divisors of $R$, with two distinct vertices $x$ and $y$ adjacent if and only if $xy=0$. This graph was introduced by Anderson and Livingston \cite{anderson} as a modification of the earlier construction of Beck \cite{beck}. Over the years, zero-divisor graphs have attracted significant attention due to their rich structural and spectral properties.
	
	A systematic spectral study of such graphs was initiated by LaGrange for Boolean rings \( \mathbb{Z}_2^n \) in a series of papers \cite{lagrange1,lagrange2,lagrange3,lagrange4 }. LaGrange\cite{lagrange4} established the reciprocal eigenvalue property of the Boolean graph which later on, Kadu, Sonawane and Borse \cite{kadu} studied in a more general setting through the zero-divisor graphs of subposets of the Boolean algebra \( B_n \). LaGrange  constructed two Pascal-type matrices \(P\) and \(Q\), each of order \(n-1\), and expressed the spectrum of this graph in terms of the spectra of these matrices \cite{lagrange1,lagrange2}. The matrix \(P\) is the adjacency matrix of the quotient of the Boolean graph with respect to a particular equitable partition. Similarly, \(Q\) is the adjacency matrix of a quotient of a certain induced bipartite subgraph of this graph. Sonawane, Kadu and Borse \cite{sonawane1} extended this study to the zero-divisor graph \( \Gamma(R_n) \) of the reduced ring
	$R_n := {\mathbb{F}_m \times \mathbb{F}_m \times \cdots \times \mathbb{F}_m}~~~$(with $n$ terms),  where \( \mathbb{F}_m \) is a finite field of order $m$ and $n$ is an integer greater than 1. 
	The adjacency matrix of this graph has order \( m^n - (m-1)^n - 1 \). An equitable partition of this graph yields a Pascal-type matrix \( P[m,n] \) of order \( n-1 \). Similarly, an equitable partition of a particular vertex-induced bipartite subgraph yields another associated matrix \( Q[m,n] \) of order \( n-1 \) having eigenvalues $-\phi^i\xi^{n-i},$ where $\phi= \frac{1+\sqrt{4m -3}}{2}$ is the generalized golden ratio and $\xi= -(m-1)\phi^{-1},$ for $i \in \{1, 2, \dots, n-1\}.$  Sonawane~\cite{sonawane2} showed that the eigenvalues of \(P[m,n]\) and \(Q[m,n]\) are simple and nonzero, their spectra are disjoint, and together these spectra determine the spectrum of the graph \(\Gamma(R_n)\).
	\vskip.4cm
	\begin{theorem}[Sonawane~\cite{sonawane2}]\label{spectS}
		Let $m \ge 2$ and $n \ge 2$. The eigenvalues of the graph $\Gamma(R_n)$ consist precisely of the eigenvalues of $P[m,n]$, together with those of $-Q[m,n]$ and $0$. Moreover, every eigenvalue of $P[m,n]$ is simple as an eigenvalue of $\Gamma(R_n)$, and each eigenvalue  $\phi^i \xi^{\,n-i}$ of $-Q[m,n]$ occurs as an eigenvalue of $\Gamma(R_n)$ with multiplicity $\binom{n}{i}-1$, for $i \in \{1, 2, \dots, n-1\}$.
	\end{theorem}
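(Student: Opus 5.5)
We plan to prove Theorem~\ref{spectS} by decomposing $\mathbb{R}^{V}$ along the natural structure of $\Gamma(R_n)$. A nonzero zero-divisor $x=(x_1,\dots,x_n)$ is determined up to ``type'' by its support $S(x)=\{j: x_j\neq 0\}$, which is a nonempty proper subset of $[n]$. Two vertices are adjacent iff their supports are disjoint, with no loops. The class $V_S$ of vertices with support $S$ has $(m-1)^{|S|}$ elements.

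So $A(\Gamma(R_n))$ is a blow-up of the Kneser-type graph on nonempty proper subsets of $[n]$ ($S\sim T$ iff $S\cap T=\emptyset$), with each $S$ replaced by an independent set of size $(m-1)^{|S|}$. Indeed $S\cap S\neq\emptyset$, so each class is independent.

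\textbf{Step 1: the zero eigenvalue.} Any vector supported on a single class $V_S$ with coordinate sum zero is annihilated by $A$, since adjacency depends only on class. These vectors give the eigenvalue $0$ with multiplicity $\sum_S((m-1)^{|S|}-1)$. On the complement, namely vectors constant on each class, $A$ acts as the weighted matrix $B$ indexed by the $2^n-2$ subsets, with $B_{S,T}=(m-1)^{|T|}[S\cap T=\emptyset]$. So the remaining spectrum is that of $B$, which is similar to a symmetric matrix after rescaling by $(m-1)^{|S|/2}$.

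\textbf{Step 2: decomposing $B$ by the symmetric group.} $B$ commutes with the action of $S_n$ on subsets. The permutation module on $k$-subsets decomposes as $\bigoplus_{j\le\min(k,n-k)} S^{(n-j,j)}$.

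The trivial isotypic component gives a vector constant on each layer $|S|=k$ for $k=1,\dots,n-1$. On it, $B$ restricts to the level quotient $P[m,n]$, whose $(k,l)$ entry is $\binom{n-k}{l}(m-1)^l$. This is the equitable partition by $|S|$, so its eigenvalues are simple eigenvalues of $\Gamma(R_n)$, given that the cited distinctness holds and they are distinct from the rest.

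The nontrivial components require more care. We intend to use the bipartite induced subgraph that defines $Q[m,n]$, on the sets of size $i$ and $n-i$ where disjointness forces complementarity, together with the partition by intersection pattern. On the $(n-1,1)$-type and higher components, the action of $B$ reduces to a matrix conjugate to $-Q[m,n]$. We then count multiplicities. The pair structure $S\leftrightarrow [n]\setminus S$ pairs layer $i$ with layer $n-i$. A direct route is a Fourier argument: write $B$ via the tensor factorization $B+J'$-corrections. Since $\mathbb{R}^{2^{[n]}}$ with $C_{S,T}=(m-1)^{|T|}[S\cap T=\emptyset]$ over all subsets equals $\bigotimes^n \begin{pmatrix}1&m-1\\1&0\end{pmatrix}$, its eigenvalues are $\phi^i\xi^{n-i}$, where $\phi,\xi$ are the roots of $t^2-t-(m-1)$. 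Each occurs with multiplicity $\binom{n}{i}$, and $-$ signs are absorbed as in the paper's convention. Deleting the rows and columns for $\emptyset$ and $[n]$ amounts to a rank-two perturbation that only affects the $S_n$-trivial part. Hence the nontrivial part keeps multiplicities $\binom{n}{i}-1$ for $i=1,\dots,n-1$. The extremes $i=0,n$ have multiplicity $1$ and lie entirely in the trivial part, which is replaced by the spectrum of $P[m,n]$.

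\textbf{Main obstacle.} The hardest step is rigorously justifying that removing $\emptyset$ and $[n]$ changes only the trivial isotypic component, and matching the resulting eigenvalues with the eigenvalues of $-Q[m,n]$ including sign. We will handle this by noting that the vectors $e_\emptyset$ and $e_{[n]}$ are $S_n$-invariant, so the compression to their orthogonal complement commutes with the nontrivial projections. Disjointness of the spectra, taken from the cited results, then makes the multiplicity count exact.
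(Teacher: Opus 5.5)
The paper does not prove this theorem. It imports it from Sonawane, together with the spectrum of $Q[m,n]$, the simplicity and nonvanishing of the eigenvalues of $P[m,n]$, and the disjointness of the spectra. So your argument has to stand on its own.

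\textbf{What works.} The core of your argument is sound and genuinely different from anything in the paper.
\begin{itemize}
\item Zero-sum vectors on the classes $V_S$ give the kernel, and the class-constant part is $B$.
\item The full subset matrix is $C=M^{\otimes n}$ with $M=\bigl(\begin{smallmatrix}1&m-1\\1&0\end{smallmatrix}\bigr)$.
\item As your last paragraph notes, every nontrivial isotypic component of $\mathbb{R}^{2^{[n]}}$ is orthogonal to the invariant vectors $e_\emptyset,e_{[n]}$. It is therefore supported on proper nonempty subsets, so $B$ and $C$ coincide there. This is the real reason; it is not a ``rank-two perturbation''.
\item You also need that the $\phi^i\xi^{n-i}$-eigenspace of $C$ is the permutation module on $i$-subsets, so it contains the trivial representation exactly once.
\end{itemize}
With these, you correctly obtain the multiset identity: $\operatorname{spec}\Gamma(R_n)$ consists of $\operatorname{spec}P[m,n]$, plus $\phi^i\xi^{n-i}$ with multiplicity $\binom{n}{i}-1$ for $1\le i\le n-1$, plus zeros. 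There is no sign to ``absorb'': these are exactly eigenvalues of $C$.

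\textbf{First gap: the link to $-Q[m,n]$.} You never connect these values to $-Q[m,n]$. Your claim that $B$ is conjugate to $-Q[m,n]$ on ``the $(n-1,1)$-type and higher components'' is false for $(n-j,j)$ with $j\ge2$. There the multiplicity space has dimension $n-2j+1$ and carries only $\phi^i\xi^{n-i}$ for $j\le i\le n-j$.

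A fix: the vectors $f_h(S)=h(|S|)\bigl([n-1\in S]-[n\in S]\bigr)$ form a $B$-invariant $(n-1)$-dimensional slice of the $(n-1,1)$-component. A direct count gives $Bf_h=f_{-\tilde Q h}$ with $\tilde Q(k,l)=\binom{n-k-1}{l-1}(m-1)^l$. This is $Q[m,n]$ after the reindexing $k\mapsto n-k$. Hence $\operatorname{spec}(-Q[m,n])=\{\phi^i\xi^{n-i}:1\le i\le n-1\}$.

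\textbf{Second gap: the ``moreover'' clause.} This is the more serious one. Simplicity of each eigenvalue of $P[m,n]$ in $\Gamma(R_n)$, and exactness of the multiplicities $\binom{n}{i}-1$, both need three facts: $P$ has $n-1$ distinct eigenvalues, none is $0$, and none equals any $\phi^i\xi^{n-i}$. You take these from the cited source, i.e.\ from the result you are proving.

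They can be proved inside your framework.
\begin{itemize}
\item Index $P$ by support size. It is the principal submatrix, on layers $1,\dots,n-1$, of the layer matrix $\hat P$ on layers $0,\dots,n$.
\item $\hat P$ has distinct eigenvalues $\lambda_j=\phi^j\xi^{n-j}$. Its eigenvectors $x_j$ (layer profiles of the symmetrized tensors) satisfy $x_j(0)=\binom{n}{j}\lambda_j\neq0$.
\item Let $Pg=\lambda g$ and let $\hat g$ be $g$ padded by zeros. Row $n$ of $\hat P$ is $e_0^T$, so $(\hat P-\lambda)\hat g=c\,e_0$ with $c=\sum_l\binom{n}{l}(m-1)^lg_l$.
\item If $c=0$, then $\hat g\parallel x_j$ for some $j$, which is impossible since $\hat g(0)=0$. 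So $c\neq0$ for every eigenvector.
\item Hence every eigenspace of $P$ is one-dimensional. Since $P$ is diagonalizable (symmetrizable), its eigenvalues are distinct.
\item If $\lambda=\lambda_j$, pair with the left eigenvector $y_j(k)=\binom{n}{k}(m-1)^kx_j(k)$. This gives $c\,y_j(0)=0$, a contradiction.
\item Finally, $\det P\neq0$, because $P$ is anti-triangular with anti-diagonal entries that are powers of $m-1$.
\end{itemize}
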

	Lagrange~\cite{lagrange2} proved that for the Boolean graph (that is, $R_n$ with $m=2$), the above description remains valid except that $0$ does not occur as an eigenvalue.
	
	In this work, we prove that all \(n-1\) eigenvalues of \(P[m,n]\) are main eigenvalues of \(\Gamma(R_n)\), and hence they are precisely the main eigenvalues of the graph. Similarly, we show that the eigenvalues of \(Q[m,n]\) are exactly the main eigenvalues of its corresponding bipartite subgraph, and that they are the negatives of the nonzero, non-main eigenvalues of the original graph. Consequently, for any integer \(s \ge 2\), by taking \(n = s+1\) and varying \(m\), we obtain an infinite family of simple connected graphs \(\Gamma(R_{s+1})\) having exactly \(s\) main eigenvalues and \(s\) nonzero non-main eigenvalues. At the same time, this yields an infinite family of bipartite vertex-induced subgraphs whose main eigenvalues are precisely these non-main eigenvalues (up to sign) of the original graphs.
	\newpage
	Our proof is based on computing the rank of the walk matrices associated with \( P[m,n] \) and \( Q[m,n] \). We express these walk matrices in terms of a Vandermonde matrix formed from a sequence of generalized Fibonacci numbers \( F_{m,k} \). 
	
	\section{Preliminary}
	
	\noindent
	We recall some basic definitions and results used throughout the paper. 
	For a positive integer $k$, let $[k] := \{1,2,\dots,k\}$. For a positive integer $n$, let $\mathbf{e}_i$, for $i \in [n]$, denote the vector in $\mathbb{R}^n$ with $1$ in the $i$th coordinate and $0$ in all remaining coordinates.
	For a graph $G$, the vertex partition $\Pi : V(G) = V_1 \cup V_2 \cup \dots \cup V_r$ is said to be an equitable partition of $G$ if any vertex $v \in V_i$ has exactly $b_{ij}$ neighbors in $V_j$, for every pair $i,j \in [r]$. Given an equitable partition $\Pi$ of $G,$ a quotient $G/\Pi$  of $G$ is a directed multigraph with vertex set $\{ V_1,V_2, \dots, V_r \}$ and $b_{ij}$ arcs from $V_i$ to $V_j.$ Then, the adjacency matrix of the quotient $G/\Pi,$ denoted by $B_{\Pi}$, is an $r \times r$ matrix with  $(i, j)$-entry as the number of arcs from $V_i$ to $V_j$, that is,  $B_{\Pi} = (b_{ij})_{r\times r}.$  This matrix is called the \textit{quotient matrix} of $G$ corresponding to the partition $\Pi.$ By eigenvalues of the quotient $G/\Pi,$ we mean eigenvalues of the matrix $B_{\Pi}.$ 
	
	The \textit{walk matrix} of the quotient $G/\Pi$ is an $r\times r$ matrix denoted by $W(G/\Pi)$ or $W(B)$ and is defined as $W(B) = [\mathbf{e}, B\mathbf{e}, B^2\mathbf{e}, \dots, B^{r-1}\mathbf{e}],$ where $\mathbf{e}$ is the all-one vector of size $r.$ 
	Then the $(i, k + 1)$-entry of the walk matrix is given by $W(B)_{i,k+1} =[B^k\mathbf{e}]_i $ and it counts the number of walks of length $k$ starting from a vertex $v\in V_i$ \cite{rowlinson, harary}. Thus, any vertex in a cell $V_i$ of the equitable partition $\Pi$ possesses the same number of walks of length $k$ starting from it.
	
	Cvetković\cite{cvetkovic2}  proved a fundamental result that connects the main eigenvalues of a graph with the eigenvalues of its quotient matrices.
	
	\begin{lem}\textnormal{\cite{cvetkovic2}}\label{main eig of G is an eig of Quotient} 
		The spectrum of any quotient of a graph $G$ contains all main eigenvalues of $G.$
	\end{lem}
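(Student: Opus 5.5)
Let $G$ have $n$ vertices and adjacency matrix $A$, and let $\Pi : V(G)=V_1\cup\dots\cup V_r$ be an equitable partition with quotient matrix $B=B_\Pi$. The plan is to relate $A$ and $B$ through the characteristic matrix of the partition. Then any eigenvector of $A$ that is not orthogonal to $\mathbf{e}$ will be projected onto a nonzero eigenvector of $B^{\top}$ with the same eigenvalue. Since $B$ and $B^{\top}$ have the same spectrum, this gives the claim.

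\textbf{Step 1: the intertwining relation.} Let $S$ be the $n\times r$ characteristic matrix of $\Pi$, with $S_{vj}=1$ if $v\in V_j$ and $0$ otherwise. The equitable condition says that every $v\in V_i$ has exactly $b_{ij}$ neighbours in $V_j$. Entrywise this reads $(AS)_{vj}=b_{ij}=(SB)_{vj}$, so
\[
AS = SB .
\]
Because $A$ is symmetric, transposing gives $S^{\top}A = B^{\top}S^{\top}$.

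\textbf{Step 2: projecting a main eigenvector.} Let $\mu$ be a main eigenvalue of $G$. Then there is an eigenvector $x$ with $Ax=\mu x$ and $\mathbf{e}^{\top}x\neq 0$. Set $y=S^{\top}x\in\mathbb{R}^r$; its $j$-th entry is the sum of the entries of $x$ over the vertices of $V_j$. From Step 1,
\[
B^{\top}y = B^{\top}S^{\top}x = S^{\top}Ax = \mu\, S^{\top}x = \mu y .
\]

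\textbf{Step 3: nonvanishing.} This is the only point where the main-eigenvalue hypothesis is used. The cells partition $V(G)$, so $S\mathbf{e}_r=\mathbf{e}_n$, where $\mathbf{e}_r$ and $\mathbf{e}_n$ are the all-one vectors of sizes $r$ and $n$. Hence
\[
\mathbf{e}_r^{\top}y = (S\mathbf{e}_r)^{\top}x = \mathbf{e}_n^{\top}x \neq 0 ,
\]
so $y\neq 0$. Therefore $\mu$ is an eigenvalue of $B^{\top}$, and so of $B$, since a matrix and its transpose have the same characteristic polynomial.

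As $\Pi$ was an arbitrary equitable partition, every main eigenvalue of $G$ lies in the spectrum of every quotient of $G$. No step is a real obstacle. The only care needed is to verify $AS=SB$ with the correct orientation of the indices of $B$, and to note that the argument naturally yields an eigenvector of $B^{\top}$ rather than of $B$, which the equality of spectra resolves.
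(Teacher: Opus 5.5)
Your proof is correct and complete. The relation $AS=SB$ is checked with the right index orientation, and the transposition uses only the symmetry of $A$. The identity $\mathbf{e}^{\top}S^{\top}x=\mathbf{e}^{\top}x\neq 0$ is exactly where the main hypothesis is needed.

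The paper gives no proof of its own here; it simply quotes the lemma from \cite{cvetkovic2}. A common alternative argument runs in the opposite direction, lifting instead of projecting. From $AS=SB$ and $S\mathbf{e}=\mathbf{e}$ (all-one vectors of sizes $r$ and $n$), one gets $A^k\mathbf{e}=SB^k\mathbf{e}$ for all $k$. So the walk space spanned by $\mathbf{e},A\mathbf{e},A^2\mathbf{e},\dots$ lies in the column space of $S$. The orthogonal projection $E_\mu$ onto the $\mu$-eigenspace is a polynomial in $A$, so a main eigenvalue $\mu$ gives $0\neq E_\mu\mathbf{e}=Sz$ with $z\neq 0$. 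Then $S(Bz-\mu z)=0$, and the full column rank of $S$ yields $Bz=\mu z$.

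That route costs the spectral decomposition and the injectivity of $S$. In return it produces a right eigenvector of $B$ that lifts to the main eigenvector itself. The same identity $A^k\mathbf{e}=SB^k\mathbf{e}$, truncated via Cayley--Hamilton, also gives $\operatorname{rank}W(A)=\operatorname{rank}W(B)$. Combined with Hagos's theorem \cite{hagos}, this is the result of \cite{huang} that the paper relies on next.

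Your projection argument is shorter and needs nothing beyond the symmetry of $A$. Its price is landing on $B^{\top}$ rather than $B$, which you handle correctly.

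One cosmetic point: the paper reserves $\mathbf{e}_i$ for standard basis vectors. If you insert your proof, rename your all-one vectors $\mathbf{e}_r,\mathbf{e}_n$ to avoid the clash.
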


	Huang, Huang and Lu~\cite{huang} proved that the number of main eigenvalues of a graph equals the rank of the walk matrix of any of its quotient matrices.
	
	\begin{lem}{\normalfont\cite{huang}}\label{lem: main eigen. = rank of W(B)}
		Let $\Pi$ be an equitable partition of a graph $G$, and let $B$ be the quotient matrix of $G$ with respect to $\Pi$. Then the number of main eigenvalues of $G$ is equal to $\operatorname{rank}(W(B))$.
	\end{lem}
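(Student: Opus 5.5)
The plan is to reduce the statement to the corresponding fact for $G$ itself. Hagos \cite{hagos} showed that the number of main eigenvalues of $G$ equals the rank of the walk matrix $W(A)=[\mathbf{e},A\mathbf{e},\dots,A^{n-1}\mathbf{e}]$, where $A$ is the adjacency matrix of $G$ and $n=|V(G)|$. I would then show that $\operatorname{rank}W(A)=\operatorname{rank}W(B)$, using the characteristic matrix of the partition to carry walk vectors of the quotient up to $G$.

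To keep the argument self-contained, I would first recall why Hagos' result holds. Write the spectral decomposition $A=\sum_{i=1}^{t}\mu_i E_i$ with distinct eigenvalues $\mu_i$ and orthogonal projections $E_i$. Then $\mathbf{e}=\sum_i E_i\mathbf{e}$, and $E_i\mathbf{e}\neq 0$ exactly when $\mu_i$ is main. Hence
\[ A^k\mathbf{e}=\sum_{\mu_i \text{ main}} \mu_i^k E_i\mathbf{e}. \]
The nonzero vectors $E_i\mathbf{e}$ are mutually orthogonal, hence linearly independent. The coefficient matrix $(\mu_i^k)$ is Vandermonde in distinct values, with at least $s\le n$ columns, where $s$ is the number of main eigenvalues. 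So the span of $\{A^k\mathbf{e}\}_{0\le k\le n-1}$ is exactly $\operatorname{span}\{E_i\mathbf{e}: \mu_i \text{ main}\}$, which has dimension $s$.

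Next, let $\Pi: V=V_1\cup\dots\cup V_r$ and let $C$ be the $n\times r$ characteristic matrix, with $C_{vj}=1$ iff $v\in V_j$. Equitability is precisely the identity $AC=CB$: the $(v,j)$ entry on both sides counts the neighbours of $v$ in $V_j$. Since $\mathbf{e}_n=C\mathbf{e}_r$, induction gives $A^k\mathbf{e}_n=C B^k\mathbf{e}_r$ for all $k\ge 0$. Therefore
\[ W(A)=C\,[\mathbf{e}_r,B\mathbf{e}_r,\dots,B^{n-1}\mathbf{e}_r]. \]
The cells are nonempty and disjoint, so the columns of $C$ are nonzero with disjoint supports, and $C$ has full column rank $r$. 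Multiplication by $C$ is thus injective, and the rank of $W(A)$ equals the rank of the $r\times n$ matrix $[B^k\mathbf{e}_r]_{0\le k\le n-1}$.

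Finally, I would show this $r\times n$ matrix has the same rank as $W(B)$, whose columns are $B^k\mathbf{e}_r$ for $0\le k\le r-1$ (note $r\le n$). By Cayley--Hamilton, $B^r$ is a linear combination of $I,B,\dots,B^{r-1}$. Multiplying by $B^j$ and inducting, every $B^k\mathbf{e}_r$ with $k\ge r$ lies in the span of the first $r$ columns. So the extra columns do not increase the rank, and $\operatorname{rank}W(B)=\operatorname{rank}W(A)=s$.

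The only step needing care is the Vandermonde argument identifying $\operatorname{rank}W(A)$ with $s$; it needs $s\le n$ so that there are enough powers, which is automatic. The rest is bookkeeping with $AC=CB$.
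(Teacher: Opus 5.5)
Your proof is correct and complete. The paper gives no proof of this lemma: it is imported from Huang, Huang and Lu and used as a black box, and Hagos' theorem appears only as motivation in the introduction. So there is no in-paper argument to compare with.

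Your route is the natural self-contained one. It runs through Hagos' identity $\operatorname{rank}W(A)=s$ via the spectral decomposition, then the intertwining relation $AC=CB$, which gives $W(A)=C\,[\mathbf{e}_r,B\mathbf{e}_r,\dots,B^{n-1}\mathbf{e}_r]$. Injectivity of $C$ and Cayley--Hamilton then dispose of the columns with $k\ge r$. Its advantage is that it never uses symmetry or diagonalizability of $B$. That matters here, since $P[m,n]$ and $Q[m,n]$ are not symmetric.

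One wording slip: the coefficient matrix $(\mu_i^k)$ has exactly $n\ge s$ columns. What you need is that its leading $s\times s$ block is a nonsingular Vandermonde matrix, with the convention $0^0=1$ if $0$ happens to be main. Hence it has full row rank $s$.

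A slightly more economical variant of the same idea also yields Cvetkovi\'c's lemma, quoted just before this one.
\begin{itemize}
\item Since $p(A)\mathbf{e}=C\,p(B)\mathbf{e}_r$ for every polynomial $p$ and $C$ is injective, the monic polynomial of least degree annihilating $\mathbf{e}$ under $A$ coincides with the one annihilating $\mathbf{e}_r$ under $B$.
\item Your spectral decomposition shows the former is $\prod_{\mu_i\text{ main}}(x-\mu_i)$.
\item For any square matrix, the Krylov space $\operatorname{span}\{B^k\mathbf{e}_r\}$ has dimension equal to the degree of this local minimal polynomial, and is spanned by its first that-many vectors. This gives $\operatorname{rank}W(B)=s$ directly.
\item The local minimal polynomial divides the minimal polynomial of $B$, so every main eigenvalue of $G$ is an eigenvalue of $B$.
\end{itemize}
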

	
	We consider the determinant formula for a Vandermonde matrix, which will be used in the proof of the main theorems.
	
	\begin{defi}\cite{khare}
		Let $x_1, x_2, \dots, x_n$ be elements of a field $\mathbb{F}$. The \emph{Vandermonde matrix} associated with $\{x_1, x_2, \dots, x_n\}$ is defined (as the transpose of the usual Vandermonde form) by
		\[
		V=
		\begin{bmatrix}
			1 & 1 & \cdots & 1 \\
			x_1 & x_2 & \cdots & x_n \\
			x_1^{2} & x_2^{2} & \cdots & x_n^{2} \\
			\vdots & \vdots & \ddots & \vdots \\
			x_1^{\,n-1} & x_2^{\,n-1} & \cdots & x_n^{\,n-1}
		\end{bmatrix}.
		\]
		
		Thus, $
		V = \bigl(x_k^{\,i-1}\bigr)_{\substack{ i,k= 1}}^n$,
		where the index $i$ denote the rows and $k$ denote the columns.
	\end{defi}
	
	\begin{lem}\textnormal{\cite{khare}}\label{lem: vandermonde determinant}
		The determinant of the Vandermonde matrix $V$ is given by
		\[
		\det(V) = \prod_{1 \le i < j \le n} (x_j - x_i).
		\]
		In particular, $\det(V) \neq 0$ if and only if $x_i \neq x_j$ for all $i \neq j$.
	\end{lem}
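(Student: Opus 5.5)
I would prove the determinant formula by induction on $n$, using row operations that reduce $V$ to a Vandermonde matrix of order $n-1$ in the variables $x_2,\dots,x_n$. For $n=1$ the matrix is $[1]$ and the product is empty, so the formula holds. For $n=2$ the determinant is $x_2-x_1$, which is a useful sanity check.

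For the inductive step, I replace row $i$ by (row $i$) $-\,x_1\cdot$(row $i-1$). I do this for $i=n,n-1,\dots,2$, in decreasing order, so that each operation uses an original row. None of these operations changes the determinant. After them, the $(i,k)$ entry for $i\ge 2$ is
\[
x_k^{\,i-1}-x_1x_k^{\,i-2}=(x_k-x_1)\,x_k^{\,i-2}.
\]
The first column becomes $\mathbf{e}_1$, since $x_1^{i-1}-x_1\cdot x_1^{i-2}=0$, and the first row is still all ones. Expanding along the first column then leaves the $(n-1)\times(n-1)$ matrix with entries $(x_k-x_1)x_k^{\,i-2}$, where $i,k\in\{2,\dots,n\}$.

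Column $k$ of this minor has the common factor $(x_k-x_1)$, and multilinearity lets me pull it out of each column. What remains is exactly the Vandermonde matrix in $x_2,\dots,x_n$. By the induction hypothesis its determinant is $\prod_{2\le i<j\le n}(x_j-x_i)$. Therefore
\[
\det(V)=\prod_{k=2}^{n}(x_k-x_1)\prod_{2\le i<j\le n}(x_j-x_i)=\prod_{1\le i<j\le n}(x_j-x_i).
\]

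The only real care needed is in the bookkeeping. The row operations must be done from the bottom up, and the indices of the minor must match the Vandermonde shape $x_k^{\,i-2}$ for $i=2,\dots,n$, which is powers $0$ through $n-2$. Once that is in place the remaining steps are routine.

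The ``in particular'' statement follows because $\mathbb{F}$ is a field and hence has no zero divisors. The product is nonzero if and only if every factor $x_j-x_i$ with $i<j$ is nonzero, that is, if and only if the $x_i$ are pairwise distinct.
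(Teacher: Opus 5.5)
Your proof is correct and complete. The bottom-up row operations preserve the determinant and turn the first column into $\mathbf{e}_1$. The resulting minor has entries $(x_k-x_1)x_k^{\,i-2}$, so each column contributes a factor $x_k-x_1$. The induction hypothesis then applies to the remaining Vandermonde matrix in $x_2,\dots,x_n$. Finally, the absence of zero divisors in $\mathbb{F}$ gives the ``in particular'' clause in both directions.

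The paper gives no proof of its own here; the lemma is quoted from Khare's book as a standard fact, so there is no in-paper argument to compare against. Your row-reduction induction is one of the usual textbook proofs. The other common route treats $\det(V)$ as a polynomial in the $x_k$, notes that it vanishes whenever $x_i=x_j$, deduces divisibility by $\prod_{i<j}(x_j-x_i)$, and compares degrees and one coefficient. Yours needs only elementary row operations and multilinearity, with no divisibility argument in a polynomial ring. It also yields the product formula directly over any commutative ring, with the field hypothesis needed only for the nonvanishing criterion.
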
 
	\vskip.2cm
	\section{Generalized Fibonacci Numbers}
	
	\noindent
	For $k \ge 2$, the classical Fibonacci sequence $\{F_k\}$, with initial conditions $F_0 = 0$ and $F_1 = 1$, is defined by the recurrence relation
	\[
	F_k = F_{k-1} + F_{k-2}.
	\]
	
	For a fixed real number $\alpha$, a generalized Fibonacci sequence $\{F_{\alpha,k}\}$ is defined by
	\[
	F_{\alpha,k} = F_{\alpha,k-1} + (\alpha - 1)\,F_{\alpha,k-2}, \qquad k \ge 2,
	\]
	with initial conditions $F_{\alpha,0} = 0$ and $F_{\alpha,1} = 1$.
	
	We modify the sequence $\{F_{\alpha,k}\}$ by taking initial conditions as : $F_{\alpha,0} = 1$ and $F_{\alpha,1} = 1$. Using this modified sequence, we determine the number of walks of given length starting from a vertex in the zero-divisor graph $\Gamma(R_n).$  
	
	The following identity is a well-known generalization of Cassini's identity. 
	
	\begin{defi}\cite{koshy}[D’Ocagne’s Identity] 
		For integers $ l \geq r \geq 0,$ the Fibonacci numbers with initial conditions $F_0 = 0$ and $F_1 = 1$ satisfy the following identity: 
		
		\[ F_{l} F_{r+1} - F_{l+1}F_{r} = (-1)^rF_{l-r}.\]
		
	\end{defi}
	
	We  generalize this identity for the sequence $\{F_{\alpha,k} \}.$  
	
	\begin{lem}\label{docagnes identity}
		Let $\alpha $ be a real number and  $ l, r$ be integers with $ l>r \geq 0.$  The generalized Fibonacci numbers $F_{\alpha,k}$ with initial conditions $F_{\alpha, 0} = 1$ and $F_{\alpha, 1}= 1$ satisfy the identity
		\[
		F_{\alpha,l} F_{\alpha,r+1} - F_{\alpha,l+1} F_{\alpha,r}
		= (1-\alpha)^{r+1} F_{\alpha,l-r-1}
		\]
	\end{lem}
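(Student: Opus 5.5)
We prove the identity by induction on $r$, with $l$ arbitrary subject to $l>r$. Throughout we write $F_k$ for $F_{\alpha,k}$ and $\beta := \alpha-1$, so the recurrence reads $F_{k+1} = F_k + \beta F_{k-1}$ for $k\ge 1$ and the claimed right-hand side is $(-\beta)^{r+1}F_{l-r-1}$. We also set $D(l,r) := F_l F_{r+1} - F_{l+1}F_r$.

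\emph{Base case $r=0$.} Here $l\ge 1$. Using $F_0=F_1=1$, we get
\[
D(l,0)=F_l-F_{l+1}=-\beta F_{l-1}=(1-\alpha)F_{l-1},
\]
where the middle equality is exactly the recurrence applied at index $l\ge 1$. This is the claim for $r=0$.

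\emph{Inductive step.} Suppose $1\le r<l$ and the identity holds for $r-1$ with every admissible first index; in particular it holds for the pair $(l,r-1)$, since $l>r-1$. Expand $F_{r+1}=F_r+\beta F_{r-1}$ (valid because $r\ge 1$) and $F_{l+1}=F_l+\beta F_{l-1}$. Then
\[
D(l,r)=F_lF_r+\beta F_lF_{r-1}-F_lF_r-\beta F_{l-1}F_r=-\beta\,(F_{l-1}F_r-F_lF_{r-1})=-\beta\,D(l-1,r-1).
\]
Since $l-1>r-1\ge 0$, the pair $(l-1,r-1)$ is admissible, and the inductive hypothesis gives $D(l-1,r-1)=(-\beta)^{r}F_{l-r-1}$. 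Hence $D(l,r)=(-\beta)^{r+1}F_{l-r-1}=(1-\alpha)^{r+1}F_{l-r-1}$, which completes the induction.

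The only point needing care is that every use of the recurrence has index at least $1$, which is guaranteed by $r\ge1$ and $l>r$. Note also that the index $l-r-1\ge 0$ arises precisely because of the shifted initial conditions $F_0=F_1=1$; this is why the inequality $l>r$ in the statement is strict.
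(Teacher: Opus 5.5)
Your proof is correct. It uses the same mechanism as the paper's proof, carried out by elementary induction instead of matrices. The paper writes the recurrence with the companion matrix $M=\begin{bmatrix}1&\alpha-1\\1&0\end{bmatrix}$, observes
\[
\begin{bmatrix}F_{\alpha,l+1}&F_{\alpha,r+1}\\F_{\alpha,l}&F_{\alpha,r}\end{bmatrix}
=M^{r}\begin{bmatrix}F_{\alpha,l-r+1}&F_{\alpha,1}\\F_{\alpha,l-r}&F_{\alpha,0}\end{bmatrix},
\]
and takes determinants. In your notation this gives $D(l,r)=(\det M)^{r}D(l-r,0)=(1-\alpha)^{r}D(l-r,0)$ in one stroke. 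It then evaluates $D(l-r,0)$ by the recurrence exactly as in your base case.

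Your inductive step $D(l,r)=(1-\alpha)\,D(l-1,r-1)$ is the single-factor version of this, checked by direct expansion rather than by multiplicativity of determinants. The matrix argument is shorter and shows immediately where the power of $(1-\alpha)$ comes from. Yours is fully elementary and checks explicitly that every use of the recurrence is at an admissible index, which the paper's matrix-power step leaves implicit.

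One cosmetic point: you say the inductive hypothesis holds ``in particular for the pair $(l,r-1)$'', but the pair you actually use is $(l-1,r-1)$. This is harmless, since you assumed the hypothesis for every admissible first index, but you should remove the stray remark.
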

	
	\begin{proof} 
		Consider the recurrence relation $F_{\alpha,k} = F_{\alpha,k-1} + (\alpha-1)F_{\alpha,k-2},$ for $k \geq 2.$
		This recurrence relation can be written in the matrix form as
		\[
		\begin{bmatrix}
			F_{\alpha,k+1} \\
			F_{\alpha,k}
		\end{bmatrix}
		=
		\begin{bmatrix}
			1 & \alpha-1 \\
			1 & 0
		\end{bmatrix}
		\begin{bmatrix}
			F_{\alpha,k} \\
			F_{\alpha,k-1}
		\end{bmatrix}.
		\]
		Thus,
		\[
		\begin{bmatrix}
			F_{\alpha,k+1} \\
			F_{\alpha,k}
		\end{bmatrix}
		=
		\begin{bmatrix}
			1 & \alpha-1 \\
			1 & 0
		\end{bmatrix}^{n}
		\begin{bmatrix}
			F_{\alpha,k-n+1} \\
			F_{\alpha,k-n}
		\end{bmatrix}.
		\]
		
		Using this relation for two indices $l$ and $r,$ and taking $r$ iterations
		\[
		\begin{bmatrix}
			F_{\alpha,l+1} & F_{\alpha,r+1} \\
			F_{\alpha,l}   & F_{\alpha,r}
		\end{bmatrix}
		=
		\begin{bmatrix}
			1 & \alpha-1 \\
			1 & 0
		\end{bmatrix}^{r}
		\begin{bmatrix}
			F_{\alpha,l-r+1} & F_{\alpha,1} \\
			F_{\alpha,l-r}   & F_{\alpha,0}
		\end{bmatrix}.
		\]
		
		Since $F_{\alpha,0} = 1$ and $F_{\alpha,1} = 1$, this becomes
		\[
		\begin{bmatrix}
			F_{\alpha,l+1} & F_{\alpha,r+1} \\
			F_{\alpha,l}   & F_{\alpha,r}
		\end{bmatrix}
		=
		\begin{bmatrix}
			1 & \alpha-1 \\
			1 & 0
		\end{bmatrix}^{r}
		\begin{bmatrix}
			F_{\alpha,l-r+1} & 1 \\
			F_{\alpha,l-r}   & 1
		\end{bmatrix}.
		\]
		
		Taking determinants on both sides, we have
		\begin{align*}
			F_{\alpha,l+1}F_{\alpha,r} - F_{\alpha,l}F_{\alpha,r+1}
			&=
			\left(-(\alpha-1) \right)^r
			\cdot
			\left( F_{\alpha,l-r+1} - F_{\alpha,l-r} \right).
		\end{align*}

		Using the recurrence relation for $F_{\alpha,l-r+1},$ we have
		
		\[
		F_{\alpha,l-r+1} - F_{\alpha,l-r} = (\alpha-1)F_{\alpha,l-r-1}.
		\]
		
		Substituting back, we get the required identity
		\begin{align*}
			F_{\alpha,l+1}F_{\alpha,r} - F_{\alpha,l}F_{\alpha,r+1}
			&= (1-\alpha)^r \cdot (\alpha-1)F_{\alpha,l-r-1}\\
			F_{\alpha,l}F_{\alpha,r+1} - F_{\alpha,l+1}F_{\alpha,r}
			&=  (1-\alpha)^{r+1} F_{\alpha,l-r-1}.
		\end{align*}
		
	\end{proof}

	\section{Main spectrum of the zero-divisor graph \texorpdfstring{$\Gamma(R_n)$}{Gamma(Rn)}}
	
	\noindent
	In this section, we determine the number of main eigenvalues of the zero-divisor graph \(\Gamma(R_n)\), where 
	\( R_n = \mathbb{F}_m \times \mathbb{F}_m \times \cdots \times \mathbb{F}_m \) (with \(n\) terms) and \( \mathbb{F}_m \) is a finite field of order \(m\). 
	This is done by computing the rank of the walk matrix of the quotient matrix corresponding to a suitable equitable partition of the graph.

	The graph $\Gamma(R_n)$ has \( m^n-(m-1)^n-1 \) vertices, each corresponding to an \( n \)-tuple \( (a_1,a_2,\dots,a_n) \) with \( a_i\in\mathbb{F}_m \).  Sonawane et al. \cite{sonawane1} considered a particular  equitable partition of this graph  given by $\mathcal{D} = \{D_1, D_2, \dots, D_{n-1}\},$ where each cell $D_i$ contains vertices $v\in V(\Gamma(R_n))$ having exactly $i$  number of zero coordinates. The corresponding quotient matrix, denoted by $P[m,n],$ is a Pascal-type matrix of order $n-1$ defined as follows 
	
	\begin{defi}\cite{sonawane1}
		The quotient matrix $P[m,n]$ of the zero-divisor graph $\Gamma(R_n)$ with respect to the above equitable partition $\mathcal{D}$ is an $(n-1) \times (n-1)$ matrix, with $(i,j)$-entry given by
		\[
		P[m,n](i,j) =
		\begin{cases}
			\displaystyle \binom{i}{\,n-j\,}(m-1)^{\,n-j}, & \text{if } i + j \geq n, \\[6pt]
			\quad \quad 0, & \text{if } i + j < n.
		\end{cases}
		\]
	\end{defi}

	For example, 
	\[
	P[2,4] =
	\begin{bmatrix}
		0 & 0 & 1 \\
		0 & 1 & 2 \\
		1 & 3 & 3
	\end{bmatrix},
	\qquad
	P[3,4] =
	\begin{bmatrix}
		0 & 0 & 2 \\
		0 & 4 & 4 \\
		8 & 12 & 6
	\end{bmatrix}.
	\]
	
	Sonawane \cite{sonawane2} proved that all eigenvalues of the matrix \(  P[m,n]\) are simple and nonzero, and by Theorem \ref{spectS}, they are also simple eigenvalues of the graph $\Gamma(R_n).$
	
	Consider the walk matrix $W(P) = [\textbf{e},~ P\textbf{e},~ P^2\textbf{e},~ \dots,~ P^{n-2}\textbf{e}]$ of the matrix $P =P[m,n].$  The first column of this matrix is the all-one vector $\mathbf{e}.$  We now present an explicit formula for the entries of the $(k+1)$th column $P^k\mathbf{e}$ for each $ k \in [n-2],$
	in terms of the generalized Fibonacci numbers \(F_{m,k}\). To simplify the notation, we write \((F_{m,\ell})^{\,n}\) as \(F_{m,\ell}^{\,n}\).
	\begin{lem}\label{lem: entry of WP}
		Let $P = P[m,n]$ be the quotient matrix of the zero-divisor graph $\Gamma(R_n)$ corresponding to the equitable partition $\mathcal{D}$. Then, for $i \in [n-1]$ and $k \in [n-2]$, the $(i,k+1)$-entry of the walk matrix $W(P)$ is given by:

		\[
		[W(P)]_{i,k+1} = F_{m,k}^{\,n }  \gamma_{m,k}^i 
		- \sum_{j=0}^{k-1} h_j \, F_{m,k-j-1}^{\,n } \gamma_{m,k-j-1}^i\,,
		\]
		
		where,
		\[
		\gamma_{m,k} = \frac{F_{m,k+1}}{F_{m,k}}, 
		\qquad  h_0 =  1, \,\,
		h_j = F_{m,j+1}^{\,n }- \sum_{r=0}^{j-1} h_r \, F_{m,j-r}^{\,n }\,, \text{~for~} j\geq 1\, .
		\]
	\end{lem}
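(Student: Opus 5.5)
The plan is to track how $P=P[m,n]$ acts on geometric vectors and then induct on $k$. For a nonzero real $x$, write $\mathbf{u}_x\in\mathbb{R}^{n-1}$ for the vector with $i$th coordinate $x^i$, $i\in[n-1]$. In this notation $\mathbf{e}=\mathbf{u}_1$ and the claimed identity says
\[
P^k\mathbf{e}=F_{m,k}^{\,n}\,\mathbf{u}_{\gamma_{m,k}}-\sum_{j=0}^{k-1}h_j\,F_{m,k-j-1}^{\,n}\,\mathbf{u}_{\gamma_{m,k-j-1}} .
\]
Since $m\ge 2$, the sequence $F_{m,\ell}$ (with $F_{m,0}=F_{m,1}=1$) is positive. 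Hence every $\gamma_{m,\ell}$ is defined and nonzero. Note also that $\gamma_{m,0}=1$, so $\mathbf{u}_{\gamma_{m,0}}=\mathbf{e}$.

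\textbf{Step 1 (action of $P$ on $\mathbf{u}_x$).} From the definition of $P[m,n]$, substitute $t=n-j$, so that $t$ runs over $1\le t\le i$. This gives
\[
(P\mathbf{u}_x)_i=\sum_{t=1}^{i}\binom{i}{t}(m-1)^t x^{n-t}=x^n\Bigl(1+\tfrac{m-1}{x}\Bigr)^{i}-x^n .
\]
Thus $P\mathbf{u}_x=x^n\mathbf{u}_{1+(m-1)/x}-x^n\mathbf{e}$.

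\textbf{Step 2 (the map $x\mapsto 1+(m-1)/x$ on the $\gamma$'s).} Take $x=\gamma_{m,\ell}=F_{m,\ell+1}/F_{m,\ell}$. The recurrence gives
\[
1+(m-1)\frac{F_{m,\ell}}{F_{m,\ell+1}}=\frac{F_{m,\ell+2}}{F_{m,\ell+1}}=\gamma_{m,\ell+1}.
\]
Therefore $P\mathbf{u}_{\gamma_{m,\ell}}=\gamma_{m,\ell}^{\,n}\bigl(\mathbf{u}_{\gamma_{m,\ell+1}}-\mathbf{u}_{\gamma_{m,0}}\bigr)$. Whenever a coefficient of the form $F_{m,\ell}^{\,n}$ multiplies $\mathbf{u}_{\gamma_{m,\ell}}$, the factor $\gamma_{m,\ell}^{\,n}$ turns it into $F_{m,\ell+1}^{\,n}$, by telescoping $F_{m,\ell}\gamma_{m,\ell}=F_{m,\ell+1}$.

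\textbf{Step 3 (induction on $k$).} For the base case $k=1$, Step 1 with $x=1$ gives $P\mathbf{e}=\mathbf{u}_m-\mathbf{e}$. This is $F_{m,1}^{\,n}\mathbf{u}_{\gamma_{m,1}}-h_0F_{m,0}^{\,n}\mathbf{u}_{\gamma_{m,0}}$, since $\gamma_{m,1}=m$.

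For the inductive step, apply $P$ to the displayed expression for $P^k\mathbf{e}$ and use Step 2 termwise. The leading term becomes $F_{m,k+1}^{\,n}\mathbf{u}_{\gamma_{m,k+1}}-F_{m,k+1}^{\,n}\mathbf{e}$. Each term $-h_jF_{m,k-j-1}^{\,n}\mathbf{u}_{\gamma_{m,k-j-1}}$ becomes
\[
-h_jF_{m,k-j}^{\,n}\mathbf{u}_{\gamma_{m,k-j}}+h_jF_{m,k-j}^{\,n}\mathbf{e}.
\]
These shifted terms are exactly the $j=0,\dots,k-1$ terms of the formula for $k+1$, since $(k+1)-j-1=k-j$.

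The remaining multiples of $\mathbf{e}$ collect to
\[
-\Bigl(F_{m,k+1}^{\,n}-\sum_{r=0}^{k-1}h_rF_{m,k-r}^{\,n}\Bigr)\mathbf{e}=-h_k\,\mathbf{e}=-h_kF_{m,0}^{\,n}\mathbf{u}_{\gamma_{m,0}} .
\]
This is precisely the missing $j=k$ term, by the recursive definition of $h_k$. Reading off the $i$th coordinate then gives the stated entry $[W(P)]_{i,k+1}$.

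The only real obstacle is Step 1: the binomial sum must be handled with exactly the right range of $t$. The bound $t\le i$ comes from $i+j\ge n$, and $t\ge 1$ comes from $j\le n-1$. This range is what produces the clean subtracted term $-x^n\mathbf{e}$. Once that identity is in hand, the rest is bookkeeping, and the recursive definition of $h_j$ is tailored exactly to absorb the $\mathbf{e}$-components.
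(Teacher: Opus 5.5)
Your proof is correct and follows essentially the same route as the paper: induction on $k$, evaluating the row sums against geometric vectors with the binomial theorem, using $1+(m-1)/\gamma_{m,k}=\gamma_{m,k+1}$ and $F_{m,k}^{\,n}\gamma_{m,k}^{\,n}=F_{m,k+1}^{\,n}$, and absorbing the leftover multiples of $\mathbf{e}$ into $h_k$. The differences are presentational: you isolate the identity $P\mathbf{u}_x=x^n\mathbf{u}_{1+(m-1)/x}-x^n\mathbf{e}$ as a separate step, and you note explicitly that $F_{m,\ell}>0$, so dividing by $\gamma_{m,\ell}$ is legitimate. The paper carries out the same computation inline, coordinate by coordinate.
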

	
	\begin{proof} We prove the result by induction on $k$. Suppose $ k = 1,$ then the second column of the matrix $W(P)$ is $P\mathbf{e}.$  We compute its $i$th entry by evaluating the row sum:
		$$
		\left[ P \mathbf{e} \right]_i =\sum_{j=n-i}^{n-1} \binom{i}{n-j} (m-1)^{n-j}\,.
		$$
		With the change of variable $r = n - j$, we obtain
		$$
		[P\mathbf{e}]_i 
		= \sum_{r=1}^{i} \binom{i}{r} (m-1)^r 
		= (1 + (m-1))^i - 1 \\
		= \left( \frac{F_{m,1} + (m-1)F_{m,0}}{F_{m,1}} \right)^i - 1 \,.$$
		Since $F_{m,0} = 1 = F_{m,1}$ and $\gamma_{m,1} = \frac{F_{m,2}}{F_{m,1}}$, $\gamma_{m,0} = \frac{F_{m,1}}{F_{m,0}} = 1\,,$ thus by the recurrence formula $F_{m,k}$, we obtain
		\[
		[P\mathbf{e}]_i = \gamma_{m,1}^i - 1 = F_{m,1}^{\,n} \gamma_{m,1}^i - h_0 =  F_{m,1}^{\,n}\gamma_{m,1}^i - h_0\,F_{m,0}^{\,n}\gamma_{m,0}^i\,,
		\]
		which matches the required formula. Hence, the result holds for $k = 1.$ 
		

		Assume that the result holds for $ k.$ We prove it for $k+1$. The $(k+2)$th column of the matrix $W(P)$ is  $[P^{k+1}\mathbf{e}],$ which can be written as $P(P^k\mathbf{e}).$ 
		Using matrix multiplication, we compute its $i$th entry as follows:
		
		\[
		[W(P)]_{i,k+2} =  \left[ P^{k+1} \mathbf{e} \right]_i = \left[ P (P^k \mathbf{e}) \right]_i.
		\]
		
		Thus,
		\[
		\left[ W(P)\right]_{i,{k+2}}
		= \sum_{j=n-i}^{n-1} \binom{i}{n-j} (m-1)^{n-j} \,\left[ P^{k} \mathbf{e} \right]_j,
		\]
		\noindent
		changing variable $r = n - j$, we rewrite
		
		\begin{align*}
			\left[ P^{k+1} \mathbf{e} \right]_i
			&= \sum_{r=1}^{i} \binom{i}{r} (m-1)^r \, \left[ P^{k} \mathbf{e} \right]_{n-r}\\
			&= \sum_{r=1}^{i} \binom{i}{r} (m-1)^r
			\left(
			F_{m,k}^{\,n }  \gamma_{m,k}^{\,n-r} 
			- \sum_{j=0}^{k-1} h_j \, F_{m,k-j-1}^{\,n } \gamma_{m,k-j-1}^{\,n-r}
			\right)\\
			&= F_{m,k}^{\,n}  \gamma_{m,k}^{\,n}
			\sum_{r=1}^{i} \binom{i}{r} \left(\frac{m-1}{\gamma_{m,k}}\right)^r
			- \sum_{j=0}^{k-1} h_j \, F_{m,k-j-1}^{\,n}  \gamma_{m,k-j-1}^{\,n}
			\sum_{r=1}^{i} \binom{i}{r} \left(\frac{m-1}{\gamma_{m,k-j-1}}\right)^r\\
			&= F_{m,k}^{\,n}  \gamma_{m,k}^{\,n}
			\left[\left(1 + \frac{m-1}{\gamma_{m,k}}\right)^i - 1\right]
			\quad - \sum_{j=0}^{k-1} h_j \, F_{m,k-j-1}^{\,n}  \gamma_{m,k-j-1}^{\,n}
			\left[\left(1 + \frac{m-1}{\gamma_{m,k-j-1}}\right)^i - 1\right]\\
			&= F_{m,k+1}^{\,n}  \left( \gamma_{m,k+1}^i - 1 \right)
			- \sum_{j=0}^{k-1} h_j \, F_{m,k-j}^{\,n} \left( \gamma_{m,k-j}^i - 1 \right)\\
			&= F_{m,k+1}^{\,n} \gamma_{m,k+1}^i 
			- \sum_{j=0}^{k-1} h_j \, F_{m,k-j}^{\,n} \gamma_{m,k-j}^i  
			- \left(F_{m,k+1}^{\,n} - \sum_{j=0}^{k-1} h_j \, F_{m,k-j}^{\,n}\right)\\
			&= F_{m,k+1}^{\,n} \gamma_{m,k+1}^i 
			- \sum_{j=0}^{k} h_j \, F_{m,k-j}^{\,n} \gamma_{m,k-j}^i.
		\end{align*}

		
		
		which is precisely the required form.
		
		Thus, the statement holds for \(k+1\), and the proof is complete by induction.
	\end{proof}
	
	Note that the $(i,k + 1)$-entry of the walk matrix $W(P)$ equals the total number of walks of length $k$ in the original graph that start from any vertex in the cell $D_i$ of the equitable partition $\mathcal{D}$ \cite{rowlinson}. We now determine the rank of  $W(P)$ and thus find the number of main eigenvalues of the graph $\Gamma(R_n).$
	
	\begin{theorem}\label{thm:P-rank} The  zero-divisor graph $\Gamma(R_n)$ has exactly $n-1$ main eigenvalues. 
	\end{theorem}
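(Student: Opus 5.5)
The plan is to apply Lemma~\ref{lem: main eigen. = rank of W(B)} to the equitable partition $\mathcal{D}$. The number of main eigenvalues of $\Gamma(R_n)$ equals $\operatorname{rank} W(P)$ with $P=P[m,n]$, and $W(P)$ is an $(n-1)\times(n-1)$ matrix. So it suffices to show $\det W(P)\neq 0$. Combined with Lemma~\ref{main eig of G is an eig of Quotient}, this will also show that the $n-1$ simple eigenvalues of $P$ are exactly the main eigenvalues.

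\textbf{Factorization of $W(P)$.} For $k=0,1,\dots,n-2$ define the vector $u_k\in\mathbb{R}^{n-1}$ by $(u_k)_i=\gamma_{m,k}^{\,i}$. Since $\gamma_{m,0}=F_{m,1}/F_{m,0}=1$, the first column of $W(P)$ is $\mathbf{e}=u_0$. By Lemma~\ref{lem: entry of WP}, the $(k+1)$th column is
\[
F_{m,k}^{\,n}u_k-\sum_{j=0}^{k-1}h_jF_{m,k-j-1}^{\,n}u_{k-j-1},
\]
which is $F_{m,k}^{\,n}u_k$ plus a linear combination of $u_0,\dots,u_{k-1}$. Hence $W(P)=U\,T$, where $U=[u_0,u_1,\dots,u_{n-2}]$ and $T$ is upper triangular with diagonal entries $F_{m,0}^{\,n},F_{m,1}^{\,n},\dots,F_{m,n-2}^{\,n}$. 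Since $m\ge 2$, we have $m-1\ge 1$, and an immediate induction from $F_{m,0}=F_{m,1}=1$ gives $F_{m,k}>0$ for all $k$. Thus $\det T\neq 0$ and each $\gamma_{m,k}>0$.

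\textbf{Reduction to a Vandermonde determinant.} Since $\gamma_{m,k}^{\,i}=\gamma_{m,k}\cdot\gamma_{m,k}^{\,i-1}$, we can write $U=V\operatorname{diag}(\gamma_{m,0},\dots,\gamma_{m,n-2})$. Here $V$ is the Vandermonde matrix associated with $\{\gamma_{m,0},\dots,\gamma_{m,n-2}\}$. The diagonal factor is invertible because each $\gamma_{m,k}>0$. By Lemma~\ref{lem: vandermonde determinant}, it remains to show that the $\gamma_{m,k}$ are pairwise distinct.

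\textbf{Distinctness of the $\gamma_{m,k}$.} This is the crux, and it is where Lemma~\ref{docagnes identity} enters. For $l>r\ge 0$,
\[
\gamma_{m,r}-\gamma_{m,l}=\frac{F_{m,l}F_{m,r+1}-F_{m,l+1}F_{m,r}}{F_{m,r}F_{m,l}}=\frac{(1-m)^{r+1}F_{m,l-r-1}}{F_{m,r}F_{m,l}}.
\]
This is nonzero because $m\neq 1$ and $F_{m,l-r-1}>0$.

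Therefore $\det W(P)=\det V\cdot\prod_k\gamma_{m,k}\cdot\det T\neq 0$, so $\operatorname{rank}W(P)=n-1$ and $\Gamma(R_n)$ has exactly $n-1$ main eigenvalues. The only delicate point is the bookkeeping in the triangular factorization. The columns must be indexed so that column $k+1$ involves only $u_0,\dots,u_k$ with the coefficient $F_{m,k}^{\,n}$ on $u_k$, and the coefficients $h_j$ need not be controlled at all.
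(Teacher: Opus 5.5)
Your proposal is correct and follows essentially the same route as the paper. The paper writes $W(P)=VDU$ with $D=\operatorname{diag}(F_{m,k}^{\,n}\gamma_{m,k})$ and $U$ unit upper triangular, while you absorb the factors $F_{m,k}^{\,n}$ into your triangular matrix $T$, which is the same factorization regrouped, and your distinctness argument for the $\gamma_{m,k}$ via Lemma~\ref{docagnes identity} matches the paper's; your explicit induction showing $F_{m,k}>0$ is a small welcome addition, since the paper only asserts $F_{m,k}\neq 0$.
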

	
	\begin{proof} Let $ P = P[m, n]$ be the quotient matrix corresponding to the equitable partition $\mathcal{D}$ of the graph $\Gamma(R_n).$ In view of Lemma \ref{lem: main eigen. = rank of W(B)}\,, it is sufficient to show that the rank of the walk matrix $W(P)$ is $n-1\,.$ Since $W(P)$ is an $(n-1)\times(n-1)$ matrix, we prove that $W(P)$ has a nonzero determinant.  
		
		Let $C_k = F_{m,k}^{\,n},$ for each $k\in\{0, 1, 2, \dots, n-2\}.$ Then $C_0 = F_{m,0}^n = 1.$  The first column of the walk matrix is $[1,\, 1,\, \dots, 1]^T,$ which can be replaced by the vector  $[C_0\gamma_{m,0},\, C_0\gamma_{m,0}^2,\, \dots,\, C_0\gamma_{m,0}^{n-1}]^T$ as $\gamma_{m,0} =   \frac{F_{m,1}}{F_{m,0}} = 1.$ This gives a Vandermonde-type structure to the matrix $W(P)$.
		By Lemma~\ref{lem: entry of WP}, this matrix has the form

		{\scriptsize
			\setlength{\arraycolsep}{3pt}
			\renewcommand{\arraystretch}{1.2}
			\[
			W(P) = 
			\begin{bmatrix}
				C_0\gamma_{m,0} 
				& C_1 \gamma_{m,1}- h_0C_0\gamma_{m,0} 
				& C_2 \gamma_{m,2} - \displaystyle\sum_{j=0}^{1} h_j C_{1-j}\gamma_{m,1-j} 
				& \cdots 
				& C_{n-2} \gamma_{m,n-2} - \displaystyle\sum_{j=0}^{n-3} h_j C_{n-j-3}\gamma_{m, n-j-3} \\
				
				C_0 \gamma_{m,0}^2 
				& C_1 \gamma_{m,1}^2 - h_0C_0\gamma_{m,0}^2 
				& C_2 \gamma_{m,2}^2 - \displaystyle\sum_{j=0}^{1} h_j C_{1-j}\gamma_{m,1-j}^2 
				& \cdots 
				& C_{n-2} \gamma_{m,n-2}^2 - \displaystyle\sum_{j=0}^{n-3} h_j C_{n-j-3}\gamma_{m, n-j-3}^2 \\
				
				C_0 \gamma_{m,0}^3 
				& C_1 \gamma_{m,1}^3 - h_0C_0\gamma_{m,0}^3 
				& C_2 \gamma_{m,2}^3 - \displaystyle\sum_{j=0}^{1} h_j C_{1-j}\gamma_{m,1-j}^3 
				& \cdots 
				& C_{n-2} \gamma_{m,n-2}^3 - \displaystyle\sum_{j=0}^{n-3} h_j C_{n-j-3}\gamma_{m, n-j-3}^3 \\
				
				\vdots & \vdots & \vdots & \ddots & \vdots \\
				
				C_0 \gamma_{m,0}^{n-1} 
				& C_1 \gamma_{m,1}^{n-1} - h_0C_0\gamma_{m,0}^{n-1} 
				& C_2 \gamma_{m,2}^{n-1} - \displaystyle\sum_{j=0}^{1} h_j C_{1-j}\gamma_{m,1-j}^{n-1} 
				& \cdots 
				& C_{n-2} \gamma_{m,n-2}^{n-1} - \displaystyle\sum_{j=0}^{n-3} h_j C_{n-j-3}\gamma_{m, n-j-3}^{n-1}
			\end{bmatrix}.
			\]
		}
		
		
		
		
		
		Observe that the columns of $W(P)$ can be written as a linear combination of the vectors $\mathbf{v}_0, \mathbf{v}_1,\dots, \mathbf{v}_{n-2},$ where  
		$\mathbf{v}_i = [C_i\gamma_{m,i},\, C_i\gamma_{m,i}^2,\, \dots,\, C_i\gamma_{m,i}^{n-1}]^T, \, \text{for } i \in \{0, 1, 2, \dots, n-2\}.$  Then the first column is $\mathbf{v}_0,$ second is $\mathbf{v}_1 - h_0\mathbf{v}_0,$ and in general, for $k \in [n-2]$, the $(k+ 1)$th column is 
		\begin{align*}
			P^k\mathbf{e} &= \mathbf{v}_k - h_0 \mathbf{v}_{k-1} - h_1 \mathbf{v}_{k-2} - \cdots - h_{k-2} \mathbf{v}_1 - h_{k-1} \mathbf{v}_0\\
			&= - h_{k-1} \mathbf{v}_0 - h_{k-2} \mathbf{v}_1 -\dots- - h_1 \mathbf{v}_{k-2} - h_0 \mathbf{v}_{k-1}+\mathbf{v}_k \\
			& =\begin{bmatrix}
				\mathbf{v}_0 & \mathbf{v}_1 & \cdots &  \mathbf{v}_{k-1} & \mathbf{v}_k & \mathbf{v}_{k+1}&\cdots& \mathbf{v}_{n-2} 
			\end{bmatrix}H_k\,,
		\end{align*}
		\noindent
		where $H_k$ is the coefficient vector of size $n-1$ as follows
		\[
		H_k = [-h_{k-1},\, -h_{k-2},\, \dots,\, -h_1,\, -h_0,\, 1,\, 0,\, \dots,\, 0\,]^T\,.
		\]
		Note that $H_{n-2} = [-h_{n-3},\, -h_{n-4},\, \dots,\, -h_1,\, -h_0,\, 1\,]^T\,.
		$

			Accordingly, $W(P)$ can be written as
			\[
			W(P) = B U\,,
			\]
			where $B$ is an $(n-1)\times(n-1)$ matrix formed by columns vectors $\mathbf{v}_i$ given by
			\begin{align*}
				\renewcommand{\arraystretch}{1.3}
				B &= \begin{bmatrix}
					\mathbf{v}_0 & \mathbf{v}_1 & \cdots & \mathbf{v}_{n-2} 
				\end{bmatrix}\\
				& =
				\begin{bmatrix}
					C_0\gamma_{m,0} & C_1\gamma_{m,1} & \cdots & C_{n-2}\gamma_{m,n-2} \\
					C_0\gamma_{m,0}^2 & C_1\gamma_{m,1}^2 & \cdots & C_{n-2}\gamma_{m,n-2}^2 \\
					\vdots & \vdots & \ddots & \vdots \\
					C_0\gamma_{m,0}^{n-1} & C_1\gamma_{m,1}^{n-1} & \cdots & C_{n-2}\gamma_{m,n-2}^{n-1}
				\end{bmatrix}
			\end{align*}
			
			and $U$ is an $(n-1)\times(n-1)$ upper triangular matrix of coefficients, explicitly of the form,
			
			\begin{align*}
				\renewcommand{\arraystretch}{1.3}
				U &= \begin{bmatrix}
					\mathbf{e}_1 & H_1 & H_2 & \cdots & H_{n-2} 
				\end{bmatrix}\\
				& =
				\begin{bmatrix}
					1 & -h_0 & -h_1 & \cdots & -h_{n-3} \\
					0 & 1 & -h_0 & \cdots & -h_{n-4} \\
					0 & 0 & 1 & \cdots & -h_{n-5} \\
					\vdots & \vdots & \vdots & \ddots & \vdots \\
					0 & 0 & 0 & \cdots & 1
				\end{bmatrix}\,. 
			\end{align*}
			
			Next, observe that $B$ can be factorized as
			\[
			B = V D\,,
			\]
			where
			\[\renewcommand{\arraystretch}{1.2}
			V =
			\begin{bmatrix}
				1 & 1 & 1 & \cdots & 1 \\
				\gamma_{m,0} & \gamma_{m,1}& \gamma_{m,2} & \cdots & \gamma_{m,n-2} \\
				\gamma_{m,0}^2 & \gamma_{m,1}^2 & \gamma_{m,2}^2 & \cdots & \gamma_{m,n-2}^2 \\
				\vdots & \vdots & \ddots & \vdots \\
				\gamma_{m,0}^{n-2} & \gamma_{m,1}^{n-2} &\gamma_{m,2}^{n-2} & \cdots & \gamma_{m,n-2}^{n-2}
			\end{bmatrix}
			\]
			is a Vandermonde matrix and
			$
			D = \mathrm{diag}(\,C_0 \gamma_{m,0},\, C_1 \gamma_{m,1},\, \cdots  ,\, C_{n-2} \gamma_{m,n-2}\,)
			$
			is a diagonal matrix.
			
			Thus,
			\[
			W(P) = V D \,U,
			\]
			\vskip.3cm
			\noindent and hence \, $\det(W(P)) = \det(V)\det(D)\det(U).$
			
			Since $U$ is upper triangular with all diagonal entries equal to $1$, we have $\det(U) = 1$. Also, $\det(D) \neq 0$ as for each $k \geq 0,$ we have $C_k = F_{m,k}^n \neq 0$ and hence, $\gamma_{m,k} = \frac{F_{m,k+1}}{F_{m,k}} \neq 0.$
			
			
			We now show that $\det(V) \neq 0$. By the Vandermonde determinant formula (Lemma~\ref{lem: vandermonde determinant}), we have
			\[
			\det(V) = \prod_{0 \le r < l \le n-2} (\gamma_{m,l} - \gamma_{m,r})\,.
			\]
			
			Suppose, for contradiction, that $\gamma_{m,r} = \gamma_{m,l}\,,$ for some $r \neq l$. Then
			$\displaystyle{
				\frac{F_{m,r+1}}{F_{m,r}} = \frac{F_{m,l+1}}{F_{m,l}} }\,,$
			which implies
			\[
			F_{m,l}F_{m,r+1} - F_{m,l+1}F_{m,r} = 0\,. 
			\]
			Hence, by Lemma~\ref{docagnes identity} (D'Ocagne-type identity), 
			\[
			(1-m)^{r+1} F_{m,l-r-1} = 0\,. 
			\]
			However, the left side of this expression is nonzero as $m \geq 2$ and $F_{m,i} \neq 0$ for all $i \geq 0$. Thus, $\gamma_{m,r} \neq \gamma_{m,l}$ for all $0 \leq r < l \leq n-2$, and consequently $\det(V) \neq 0$.
			
			Hence, $\det(W(P)) = \det(V)\det(D)\det(U) \neq 0$. It follows that $\operatorname{rank}(W(P)) = n-1$. 
			Thus, by Lemma~\ref{lem: main eigen. = rank of W(B)}\,, the  zero-divisor graph $\Gamma(R_n)$ has exactly $n-1$ main eigenvalues.  
		\end{proof}
		
		\begin{corollary}
			The determinant of the walk matrix of the quotient matrix $P[m,n]$ is given by
			\[
			det(W(P[m,n])) = \prod_{0 \leq r < l \leq n-2} (\gamma_{m,l} - \gamma_{m,r})\prod_{0 \leq k \leq n-2} C_k\gamma_{m,k}\,,
			\]
			\medskip 
			where 
			$\displaystyle \gamma_{m,k} = \frac{F_{m,k+1}}{F_{m,k}}$ and\, $C_k = F_{m,k}^{\,n}\,$ for all $k\in[n-2]$.
		\end{corollary}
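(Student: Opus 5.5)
The corollary should follow directly from the factorization $W(P) = V D U$ established in the proof of Theorem~\ref{thm:P-rank}. By multiplicativity of the determinant, $\det(W(P)) = \det(V)\det(D)\det(U)$. So I would compute the three factors separately and multiply them.

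First, $U$ is upper triangular with every diagonal entry equal to $1$, since its columns are $\mathbf{e}_1, H_1, \dots, H_{n-2}$ and the last nonzero entry of $H_k$ is the $1$ in position $k+1$. Hence $\det(U)=1$.

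Second, $D = \mathrm{diag}(C_0\gamma_{m,0}, \dots, C_{n-2}\gamma_{m,n-2})$, so $\det(D) = \prod_{0\le k\le n-2} C_k\gamma_{m,k}$.

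Third, $V$ is exactly the Vandermonde matrix of the definition, with nodes $x_1=\gamma_{m,0}, \dots, x_{n-1}=\gamma_{m,n-2}$. Lemma~\ref{lem: vandermonde determinant} then gives $\det(V)=\prod_{0\le r<l\le n-2}(\gamma_{m,l}-\gamma_{m,r})$.

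Multiplying the three factors yields the stated formula.

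The only step needing care is the factorization $B = VD$ itself. Row $i$ of $B$ has entries $C_k\gamma_{m,k}^{i}$, and this equals $\gamma_{m,k}^{i-1}\cdot C_k\gamma_{m,k}$. So $B$ is $V$ (row index shifted by one) times the diagonal scaling $D$ acting on columns, which is consistent with the proof of Theorem~\ref{thm:P-rank}.

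I would also note two small points. The product over $k$ in the corollary includes $k=0$, where $C_0\gamma_{m,0}=1$. The phrase ``for all $k\in[n-2]$'' in the statement should therefore be read as including $k=0$. Neither point presents a real obstacle.
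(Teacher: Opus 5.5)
Your proposal is correct and follows the paper's route: you read the corollary off the factorization $W(P)=VDU$ from the proof of Theorem~\ref{thm:P-rank}, using $\det(U)=1$, $\det(D)=\prod_{k} C_k\gamma_{m,k}$, and the Vandermonde formula for $\det(V)$. You are also right that the products include $k=0$ (where $C_0\gamma_{m,0}=1$), so the statement's ``$k\in[n-2]$'' should be read as $0\le k\le n-2$.
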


		The following result is an immediate consequence of  Lemma~\ref{main eig of G is an eig of Quotient} and Theorem~\ref{thm:P-rank} and the fact that all $n-1$ eigenvalues of the matrix $P[m,n]$ are distinct and nonzero\cite{sonawane2}. 
		\begin{corollary}\label{cor:main eigenvalues of ZDG are eigenvalue of P}
			The main eigenvalues of the graph $\Gamma(R_n)$ are precisely the eigenvalues of the quotient matrix $P[m,n]$\,, all of which are nonzero.
		\end{corollary}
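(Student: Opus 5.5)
The plan is to sandwich the set of main eigenvalues of $\Gamma(R_n)$ between two sets of equal size. Let $P=P[m,n]$ be the quotient matrix of $\Gamma(R_n)$ for the equitable partition $\mathcal{D}$.

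First, Lemma~\ref{main eig of G is an eig of Quotient} shows that every main eigenvalue of $\Gamma(R_n)$ lies in the spectrum of the quotient $\Gamma(R_n)/\mathcal{D}$, that is, in the spectrum of $P$. So the set $M$ of distinct main eigenvalues of $\Gamma(R_n)$ is contained in the set $S$ of distinct eigenvalues of $P$.

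Next, we compare cardinalities:
\begin{itemize}
\item By \cite{sonawane2} (recalled before Theorem~\ref{spectS}), the $n-1$ eigenvalues of $P$ are simple and nonzero. Hence $|S| = n-1$.
\item Theorem~\ref{thm:P-rank} gives $|M| = n-1$. It was obtained through Lemma~\ref{lem: main eigen. = rank of W(B)} by showing that $\operatorname{rank} W(P) = n-1$.
\end{itemize}
Since $M \subseteq S$ and both are finite sets with $n-1$ elements, $M = S$. Thus the main eigenvalues of $\Gamma(R_n)$ are exactly the eigenvalues of $P[m,n]$, and all of them are nonzero by the same result of \cite{sonawane2}.

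The only point needing care is the distinctness of the eigenvalues of $P$. Without it, $S$ could have fewer than $n-1$ elements, and the equal-cardinality argument would fail; in fact it would contradict Theorem~\ref{thm:P-rank}. This distinctness is exactly what the cited simplicity result of \cite{sonawane2} provides, so no step is a genuine obstacle. Everything substantive is already in Theorem~\ref{thm:P-rank}.

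As a consistency check, Theorem~\ref{spectS} also lists each eigenvalue of $P$ as a simple eigenvalue of $\Gamma(R_n)$. This matches the fact that the main eigenvalues fill out the $P$-part of the spectrum.
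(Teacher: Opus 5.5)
Your proposal is correct and is essentially the paper's own argument. The paper derives the corollary the same way: Lemma~\ref{main eig of G is an eig of Quotient} gives the containment of the main eigenvalues in the spectrum of $P[m,n]$, Theorem~\ref{thm:P-rank} gives that there are $n-1$ of them, and the distinctness and nonvanishing of the $n-1$ eigenvalues of $P[m,n]$ from \cite{sonawane2} then force equality and give nonzeroness.
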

		As seen in Section~1, the Boolean graph $\Gamma(\mathbb{Z}_2^n)$ has exactly $2n-2$ distinct eigenvalues, of which exactly half are main by the above corollary and the remaining half are non-main. In the next section, we prove that the non-main eigenvalues the graph $\Gamma(R_n)$ are related to the main eigenvalues of  one its subgraphs. 
		
		\section{Main spectrum of a bipartite subgraph of
			\texorpdfstring{$\Gamma(R_n)$}{Gamma(Rn)}}
		\noindent
		In this section, we consider a vertex-induced bipartite subgraph of the zero-divisor graph $ \Gamma(R_n)$ and prove that it has $n-1$ main eigenvalues, using the quotient matrix corresponding to an equitable partition. 
		
		Note that any vertex of the graph $\Gamma(R_n)$ is an $n$-tuple with coordinates belonging to the field $\mathbb{F}_m.$ Sonawane, Kadu and Bosre~\cite{sonawane1} constructed a subgraph of this graph induced by the following two vertex sets: 
		\[
		\begin{aligned}
			X_{*0} &= \{(a_1, a_2, \dots, a_n) \in V(\Gamma(R_n)) \colon a_{n-1} \neq 0,\; a_n = 0\}, \\ \text{and} \quad
			X_{0*} &= \{(a_1, a_2, \dots, a_n) \in V(\Gamma(R_n)) \colon a_{n-1} = 0,\; a_n \neq 0\}.
		\end{aligned}
		\]
		
		
		Let $\Gamma'(R_n)$ be the subgraph of $\Gamma(R_n)$ induced by the vertex set $X_{*0} \cup X_{0*}$. It is easy to see that no two vertices in the same set $X_{*0}$ or $X_{0*}$ are adjacent, since their coordinates corresponding to $*$ are both units, which gives a nonzero product. Hence, this graph is a bipartite graph with bipartition $(X_{*0}, X_{0*})$, and further, it has
		$
		|X_{*0}| + |X_{0*}| = 2|X_{*0}| = 2(m-1)m^{n-2}$ vertices.  
		Then the equitable partition $\mathcal{D} = \{D_1, D_2, \dots, D_{n-1} \}$ of the graph $\Gamma(R_n)$ induces an equitable partition $\mathcal{D'} = \{D'_1, D'_2, \dots, D'_{n-1}\}$ of the graph $\Gamma'(R_n),$ where  $D'_i = D_i \cap \{X_{*0} \cup X_{0*}\}.$  The corresponding quotient matrix, denoted by $Q[m,n],$ is a Pascal-type matrix of order $n-1$ defined as follows.
		\begin{defi}\cite{sonawane1}
			The quotient matrix $Q[m,n]$ of the bipartite subgraph $\Gamma'(R_n)$ with respect to the above equitable partition $\mathcal{D'}$ is an $(n-1) \times (n-1)$ matrix, with $(i,j)$-entry given by
			\[
			Q[m,n](i,j) =
			\begin{cases}
				\displaystyle \binom{i-1}{\,n-j-1\,}(m-1)^{\,n-j}\,, & \text{if } i + j \geq n\,, \\[6pt]
				\quad \quad \quad 0\,, & \text{if } i + j < n\,.
			\end{cases}
			\]
		\end{defi}
		
		For example, 
		\[
		Q[2,4] =
		\begin{bmatrix}
			0 & 0 & 1 \\
			0 & 1 & 1 \\
			1 & 2 & 1
		\end{bmatrix},
		\qquad
		Q[3,4] =
		\begin{bmatrix}
			0 & 0 & 2 \\
			0 & 4 & 2 \\
			8 & 8 & 2
		\end{bmatrix}.
		\]
		Sonawane et al. \cite{sonawane1} proved that if $\lambda$ is an eigenvalue of the matrix $Q[m,n],$ then $-\lambda$ is an eigenvalue of the graph $\Gamma(R_n).$ Moreover, Sonawane \cite{sonawane2} determined the eigenvalues of $Q[m,n]$ and proved that all these eigenvalues are simple and nonzero. 
		
		

		The walk matrix of the quotient matrix $Q = Q[m,n]$ is an $(n-1)\times (n-1)$ matrix $W(Q) = [\mathbf e,\, Q\mathbf e,\, Q^2\mathbf e,\, \dots,\, Q^{n-2}\mathbf e]$. As in case of $W(P),$ we prove that the rank of $W(Q)$ is $n-1.$   We know that the first column of this matrix is $[1,\, 1\, \dots, 1]^T.$ The entries of the next columns are derived below in terms of the generalized Fibonacci numbers.
		
		\begin{lem}\label{lem: entry of WQ}
			Let $Q = Q[m,n]$ be the quotient matrix of the induced bipartite subgraph $\Gamma'(R_n)$ corresponding to the equitable partition $\mathcal{D'}$. Then, for $i \in [n-1]$ and $k \in [n-2]$, the $(i,k+1)$-entry of the walk matrix  $W(Q)$ is given by 
			\[
			[W(Q)]_{i,k+1}= (m-1)^k F_{m,k}^{\,n-2} \, \gamma_{m,k}^{\,i-1}\,,
			\]
			where  $\displaystyle{\gamma_{m,k} = \frac{F_{m,k+1}}{F_{m,k}}}.$
		\end{lem}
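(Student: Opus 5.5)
Induction on $k$, mirroring the proof of Lemma~\ref{lem: entry of WP}. The formula for $Q$ is cleaner because the binomial sum now runs over $\binom{i-1}{r-1}$ instead of $\binom{i}{r}$, so no ``$-1$'' correction terms are produced. We therefore expect the pure geometric form $(m-1)^k F_{m,k}^{\,n-2}\gamma_{m,k}^{\,i-1}$ to propagate without any $h_j$-type coefficients.

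\textbf{Base case $k=1$.} We compute the row sum
\[
[Q\mathbf e]_i=\sum_{j=n-i}^{n-1}\binom{i-1}{n-j-1}(m-1)^{n-j}.
\]
With the substitution $r=n-j$ this becomes $\sum_{r=1}^{i}\binom{i-1}{r-1}(m-1)^r=(m-1)m^{i-1}$. Since $F_{m,1}=1$ and $\gamma_{m,1}=F_{m,2}/F_{m,1}=1+(m-1)=m$, this equals $(m-1)^1F_{m,1}^{\,n-2}\gamma_{m,1}^{\,i-1}$, as required.

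\textbf{Inductive step.} Assume the formula for $k$ and write $[Q^{k+1}\mathbf e]_i=\sum_{r=1}^{i}\binom{i-1}{r-1}(m-1)^r[Q^k\mathbf e]_{n-r}$. Inserting the hypothesis $[Q^k\mathbf e]_{n-r}=(m-1)^kF_{m,k}^{\,n-2}\gamma_{m,k}^{\,n-r-1}$ gives
\[
(m-1)^{k+1}F_{m,k}^{\,n-2}\gamma_{m,k}^{\,n-2}\sum_{s=0}^{i-1}\binom{i-1}{s}\Bigl(\frac{m-1}{\gamma_{m,k}}\Bigr)^{s},
\]
where $s=r-1$. This collapses to
\[
(m-1)^{k+1}(F_{m,k}\gamma_{m,k})^{n-2}\Bigl(1+\frac{m-1}{\gamma_{m,k}}\Bigr)^{i-1}.
\]

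Two identities then finish the step. First, $F_{m,k}\gamma_{m,k}=F_{m,k+1}$. Second, by the recurrence,
\[
1+\frac{m-1}{\gamma_{m,k}}=\frac{F_{m,k+1}+(m-1)F_{m,k}}{F_{m,k+1}}=\frac{F_{m,k+2}}{F_{m,k+1}}=\gamma_{m,k+1}.
\]
Substituting both yields exactly $(m-1)^{k+1}F_{m,k+1}^{\,n-2}\gamma_{m,k+1}^{\,i-1}$, which closes the induction.

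\textbf{Main obstacle.} The delicate point is index bookkeeping in the exponent of $\gamma_{m,k}$ after reindexing. The row index $n-r$ turns the exponent $(n-r)-1$ into $n-2-(r-1)$, and we must check that the factor $\gamma_{m,k}^{\,n-2}$ pulled out combines with $F_{m,k}^{\,n-2}$ to give precisely $F_{m,k+1}^{\,n-2}$. The range $r\le i$ also has to match the upper limit of the binomial expansion, since $\binom{i-1}{r-1}$ vanishes for $r>i$ and the condition $i+j\ge n$ is automatically encoded. Beyond this, the argument only needs $F_{m,k}\neq0$, so that $\gamma_{m,k}$ is defined; this holds because all $F_{m,k}\ge1$ for $m\ge2$.
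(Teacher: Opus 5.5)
Your proof is correct and follows the paper's argument step for step: induction on $k$, the same row-sum computation for $k=1$, and the same binomial collapse in the inductive step. Your substitution $r=n-j$ followed by $s=r-1$ is just a two-stage version of the paper's $r=n-j-1$, and you make explicit the identities $F_{m,k}\gamma_{m,k}=F_{m,k+1}$ and $1+(m-1)/\gamma_{m,k}=\gamma_{m,k+1}$, which the paper uses without comment in its final line.
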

		
		\begin{proof}
			We prove the result by induction on $k$. Suppose $ k = 1,$ then the second column of the matrix $W(Q)$ is $Q\mathbf{e}.$  We compute its $i$th entry by evaluating the row sum
			\[
			[Qe]_i =  \sum_{j=n-i}^{n-1} \binom{i-1}{n-j-1}(m-1)^{\,n-j}\,.
			\]
			Letting $r = n - j -1$, we obtain
			$$
			[Q\mathbf{e}]_i 
			= \sum_{r=0}^{i-1} \binom{i-1}{r} (m-1)^{\,r+1} 
			= (m-1)\left(1+(m-1)\right)^{i-1} 
			=(m-1)  \, \gamma_{m,1}^{\,i-1}\,,
			$$
			which matches the required formula as $F_{m,1} = 1$. Hence, the result holds for $k = 1.$ 
			
			Assume that the result holds for all integers up to $k.$ We prove it for $k+1$. The $(k+2)$th column of the matrix $W(Q)$ is  $[Q^{k+1}\mathbf{e}],$ which can be written as $[Q(Q^k\mathbf{e})].$ We compute its $i$th entry as follows:
			
			\begin{align*}
				[W(Q)]_{i,k+2} &=  \left[ Q^{k+1} \mathbf{e} \right]_i = \left[ Q (Q^k \mathbf{e}) \right]_i
				=\sum_{j=n-i}^{n-1} \binom{i-1}{n-j-1}(m-1)^{\,n-j} [Q^k e]_j\\
				&= \sum_{r=0}^{i-1} \binom{i-1}{r} (m-1)^{r+1} \, \left[ Q^{k} \mathbf{e} \right]_{n-r-1}\\
				&= \sum_{r=0}^{i-1} \binom{i-1}{r} (m-1)^{r+1} \left((m-1)^k F_{m,k}^{\,n-2} \, \gamma_{m,k}^{\,n-r-2}
				\right)\\
				&= (m-1)^{k+1}F_{m,k}^{\,n-2} \gamma_{m,k}^{\,n-2}
				\sum_{r=0}^{i-1} \binom{i-1}{r} \left(\frac{m-1}{\gamma_{m,k}}\right)^{r}\\
				&= (m-1)^{k+1} F_{m,k}^{\,n-2} \gamma_{m,k}^{\,n-2}
				\left(1 + \frac{m-1}{\gamma_{m,k}}\right)^{i-1}\\
				& = (m-1)^{k+1} F_{m,k+1}^{\,n-2} \, \gamma_{m,k+1}^{\,i-1}\,.
			\end{align*}
			
			Thus, the result holds for $k+1$, and the proof follows by induction.
		\end{proof}

		\noindent
		We now determine the rank of the walk matrix $W(Q)$ and thus find the number of main eigenvalues of the induced bipartite subgraph $\Gamma'(R_n).$

		\begin{theorem}\label{thm:Q-rank}
			The bipartite graph $\Gamma'(R_n)$ has exactly $n-1$ main eigenvalues.
		\end{theorem}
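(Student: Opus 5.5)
By Lemma~\ref{lem: main eigen. = rank of W(B)} it suffices to show that the walk matrix $W(Q)$ of $Q=Q[m,n]$ has rank $n-1$, i.e.\ $\det W(Q)\neq 0$. The strategy mirrors the proof of Theorem~\ref{thm:P-rank}, and is in fact simpler, because Lemma~\ref{lem: entry of WQ} gives each column as a single geometric vector with no correction terms.

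First we observe that the first column $\mathbf e$ fits the same pattern with $k=0$: since $F_{m,0}=1$ and $\gamma_{m,0}=F_{m,1}/F_{m,0}=1$, we have
\[
(m-1)^0F_{m,0}^{\,n-2}\gamma_{m,0}^{\,i-1}=1 .
\]
Hence for all $i\in[n-1]$ and $k\in\{0,1,\dots,n-2\}$,
\[
[W(Q)]_{i,k+1}=(m-1)^kF_{m,k}^{\,n-2}\gamma_{m,k}^{\,i-1}.
\]
This gives the factorization $W(Q)=V\,D'$. Here $V=(\gamma_{m,k}^{\,i-1})$ is exactly the Vandermonde matrix appearing in the proof of Theorem~\ref{thm:P-rank}, and
\[
D'=\mathrm{diag}\bigl((m-1)^kF_{m,k}^{\,n-2}\bigr)_{k=0}^{n-2}.
\]

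It then remains to check that both factors are nonsingular.

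\emph{The diagonal factor $D'$.} Since $m\ge 2$, we have $(m-1)^k\neq 0$. The modified Fibonacci numbers $F_{m,k}$ are positive integers, because the recurrence $F_{m,k}=F_{m,k-1}+(m-1)F_{m,k-2}$ has nonnegative coefficients and starts from $F_{m,0}=F_{m,1}=1$. So every diagonal entry of $D'$ is nonzero.

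\emph{The Vandermonde factor $V$.} By Lemma~\ref{lem: vandermonde determinant}, we need the $\gamma_{m,k}$ to be pairwise distinct for $0\le k\le n-2$. This is exactly what was shown in the proof of Theorem~\ref{thm:P-rank} using Lemma~\ref{docagnes identity}. Suppose $\gamma_{m,r}=\gamma_{m,l}$ with $r<l$. Then
\[
F_{m,l}F_{m,r+1}-F_{m,l+1}F_{m,r}=0,
\]
whereas Lemma~\ref{docagnes identity} says this quantity equals $(1-m)^{r+1}F_{m,l-r-1}$, which is nonzero.

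Consequently
\[
\det W(Q)=\prod_{0\le r<l\le n-2}(\gamma_{m,l}-\gamma_{m,r})\prod_{k=0}^{n-2}(m-1)^kF_{m,k}^{\,n-2}\neq 0,
\]
so $\operatorname{rank}W(Q)=n-1$, and $\Gamma'(R_n)$ has exactly $n-1$ main eigenvalues.

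No step here is a real obstacle. The only point needing care is the bookkeeping in absorbing the column $\mathbf e$ as the $k=0$ case, with the exponent $i-1$ rather than $i$; this is what makes the matrix a genuine Vandermonde matrix with first row all ones, so no diagonal rescaling of rows is needed. As a consequence, combining this with Lemma~\ref{main eig of G is an eig of Quotient} and the simplicity of the $n-1$ eigenvalues of $Q[m,n]$, the main eigenvalues of $\Gamma'(R_n)$ are exactly the eigenvalues of $Q[m,n]$.
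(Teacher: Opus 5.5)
Your proposal is correct and follows the paper's proof essentially verbatim. You absorb $\mathbf{e}$ as the $k=0$ column, factor $W(Q)=VD$ with $D=\mathrm{diag}\bigl((m-1)^kF_{m,k}^{\,n-2}\bigr)$, and get the distinctness of the $\gamma_{m,k}$ from Lemma~\ref{docagnes identity}; your explicit positivity argument for $F_{m,k}$ is a small welcome addition, since the paper only asserts $F_{m,k}\neq 0$.
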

		\begin{proof}
			Let $ Q = Q[m, n]$ be the quotient matrix corresponding to the equitable partition $\mathcal{D'}$ of the graph $\Gamma'(R_n).$ In view of Lemma \ref{lem: main eigen. = rank of W(B)}\,, it is sufficient to prove that the rank of the walk matrix $W(Q)$ of the quotient matrix $Q$ is $n-1.$ As this walk matrix is an $(n-1)\times(n-1)$ matrix, it is sufficient to prove that its determinant is nonzero. Let $C_k = (m-1)^{k} F_{m,k}^{\,n-2},$ then by Lemma~\ref{lem: entry of WQ} this matrix takes the form
			\[\renewcommand{\arraystretch}{1.2}
			W(Q) =
			\begin{bmatrix}
				1 & C_1 & C_2 & \cdots & C_{n-2} \\
				1 & C_1 \gamma_{m,1} & C_2 \gamma_{m,2} & \cdots & C_{n-2} \gamma_{m,n-2} \\
				1 & C_1 \gamma_{m,1}^2 & C_2 \gamma_{m,2}^2 & \cdots & C_{n-2} \gamma_{m,n-2}^2 \\
				\vdots & \vdots  & \vdots & \ddots & \vdots \\
				1 & C_1 \gamma_{m,1}^{n-2} & C_2 \gamma_{m,2}^{n-2} & \cdots & C_{n-2} \gamma_{m,n-2}^{n-2}
			\end{bmatrix}.
			\]

			Since $\gamma_{m,0}= \frac{F_{m,1}}{F_{m,0}} =1$ and $C_0 =(m-1)^{0} F_{m,0}^{\,n-2} = 1,$ the first column can be replaced by the vector 
			$[C_0,C_0\gamma_{m,0}, C_0\gamma_{m,0}^{\,2},\dots, C_0\gamma_{m,0}^{\,n-2}]^T.$ Hence, 
			
			\[\renewcommand{\arraystretch}{1.2}
			W(Q) =
			\begin{bmatrix}
				C_0 & C_1 & C_2 & \cdots & C_{n-2} \\
				C_0\gamma_{m,0} & C_1 \gamma_{m,1} & C_2 \gamma_{m,2} & \cdots & C_{n-2} \gamma_{m,n-2} \\
				C_0\gamma_{m,0}^2 & C_1 \gamma_{m,1}^2 & C_2 \gamma_{m,2}^2 & \cdots & C_{n-2} \gamma_{m,n-2}^2 \\
				\vdots & \vdots  & \vdots & \ddots & \vdots \\
				C_0\gamma_{m,0}^{n-2} & C_1 \gamma_{m,1}^{n-2} & C_2 \gamma_{m,2}^{n-2} & \cdots & C_{n-2} \gamma_{m,n-2}^{n-2}
			\end{bmatrix}
			= V D,
			\]
			where 
			$$   V =
			\begin{bmatrix}
				1 & 1 & 1 & \cdots & 1 \\
				\gamma_{m,0} & \gamma_{m,1}& \gamma_{m,2} & \cdots & \gamma_{m,n-2} \\
				\gamma_{m,0}^2 & \gamma_{m,1}^2 & \gamma_{m,2}^2 & \cdots & \gamma_{m,n-2}^2 \\
				\vdots & \vdots & \ddots & \vdots \\
				\gamma_{m,0}^{n-2} & \gamma_{m,1}^{n-2} &\gamma_{m,2}^{n-2} & \cdots & \gamma_{m,n-2}^{n-2}
			\end{bmatrix} \quad \text{and} \quad D =\text{diag}(C_0,C_1,C_2, \dots,C_{n-2}).$$ 
			
			Consequently,
			\[
			\det(W(Q)) = \det(V)\det(D).
			\]
			
			Note that $\det(D) \neq 0$, since $F_{m,k} \neq 0$ and $m \ge 2$ imply that $C_k \neq 0$ for all $k$. The Vandermonde matrix $V$ is identical to that arising in the proof of Theorem~\ref{thm:P-rank}. Hence, by the same argument, $\det(V) \neq 0$. Therefore, $\det(W(Q))= \det(V)\det(D) \neq 0. $ It follows that $W(Q)$ is of full rank, that is, $\operatorname{rank}(W(Q)) = n-1$. Thus, by Lemma~\ref{lem: main eigen. = rank of W(B)}\,, the graph $\Gamma'(R_n)$ has exactly $n-1$ main eigenvalues.
		\end{proof}
		
		\begin{corollary}
			The determinant of the walk matrix of the quotient matrix $Q[m,n]$ is given by
			\[
			det(W(Q[m,n])) = \prod_{0\leq r < l \leq n-2} (\gamma_{m,l} - \gamma_{m,r})\prod_{0 \leq k \leq n-2} C_k\,,
			\]
			where
			$
			C_k = (m-1)^k F_{m,k}^{\,n-2} ~~\text{and}~~~\gamma_{m,k} = \frac{F_{m,k+1}}{F_{m,k}}$ ~~for all
			$ k\in \{0, 1, \dots, n-2\}. $ 
		\end{corollary}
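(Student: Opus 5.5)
The determinant formula follows directly from the factorization $W(Q)=VD$ established in the proof of Theorem~\ref{thm:Q-rank}, so I will read the formula off that factorization rather than recompute anything.

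The first step is to recall that, by Lemma~\ref{lem: entry of WQ}, the $(i,k+1)$-entry of $W(Q)$ is $C_k\gamma_{m,k}^{\,i-1}$ with $C_k=(m-1)^kF_{m,k}^{\,n-2}$. This holds for $k\in[n-2]$, and also for $k=0$ because $C_0=1$ and $\gamma_{m,0}=F_{m,1}/F_{m,0}=1$. Hence $W(Q)=VD$, where $V=(\gamma_{m,k}^{\,i-1})$ is the Vandermonde matrix on the nodes $\gamma_{m,0},\dots,\gamma_{m,n-2}$ and $D=\operatorname{diag}(C_0,\dots,C_{n-2})$.

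The second step is to apply multiplicativity of the determinant, giving $\det W(Q)=\det V\cdot\det D$. Then:
\begin{itemize}
\item $\det D=\prod_{k=0}^{n-2}C_k$, since $D$ is diagonal.
\item $\det V=\prod_{0\le r<l\le n-2}(\gamma_{m,l}-\gamma_{m,r})$, by Lemma~\ref{lem: vandermonde determinant} with $x_{k+1}=\gamma_{m,k}$.
\end{itemize}
The only point needing attention is the index shift in the Vandermonde formula. Lemma~\ref{lem: vandermonde determinant} is stated with indices $1,\dots,n$ and the factor $(x_j-x_i)$ for $i<j$. Relabelling $x_{k+1}=\gamma_{m,k}$ turns this into the ordered product $(\gamma_{m,l}-\gamma_{m,r})$ over $r<l$, matching the stated formula with the correct sign. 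Multiplying the two determinants yields the claimed expression.

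I do not expect a real obstacle. The only care needed is confirming that the first column of $W(Q)$ genuinely fits the pattern $C_0\gamma_{m,0}^{\,i-1}$, which was already verified in the proof of Theorem~\ref{thm:Q-rank}. Nonvanishing of the determinant is not part of this statement, so no D'Ocagne argument (Lemma~\ref{docagnes identity}) is needed.
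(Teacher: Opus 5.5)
Your proposal is correct and follows the paper's route exactly. The paper also reads the formula off the factorization $W(Q)=VD$ from the proof of Theorem~\ref{thm:Q-rank}, absorbing the first column via $C_0=\gamma_{m,0}=1$, and then applies the Vandermonde determinant with $x_{k+1}=\gamma_{m,k}$ to get $\det W(Q)=\det(V)\det(D)$. You are also right that no nonvanishing argument is needed for this statement.
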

		\newpage
		The following result is an immediate consequence of  Lemma~\ref{main eig of G is an eig of Quotient} and Theorem~\ref{thm:Q-rank}. 
		
		\begin{corollary}\label{mainQcor}
			The main eigenvalues of the graph $\Gamma'(R_n)$ are precisely the eigenvalues of the quotient matrix $Q[m, n]$\,, all of which are nonzero.   
		\end{corollary}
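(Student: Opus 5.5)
We derive Corollary~\ref{mainQcor} by combining Lemma~\ref{main eig of G is an eig of Quotient}, Theorem~\ref{thm:Q-rank}, and the spectral facts about $Q[m,n]$ recalled from Sonawane~\cite{sonawane2}.

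First, we apply Lemma~\ref{main eig of G is an eig of Quotient} to the equitable partition $\mathcal{D'}$ of $\Gamma'(R_n)$. Its quotient matrix is $Q=Q[m,n]$, so every main eigenvalue of $\Gamma'(R_n)$ is an eigenvalue of $Q$. This gives an inclusion of sets: the set $M$ of distinct main eigenvalues of $\Gamma'(R_n)$ is contained in the set $S$ of distinct eigenvalues of $Q$.

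Second, we compare cardinalities. By Theorem~\ref{thm:Q-rank}, $|M| = \operatorname{rank}(W(Q)) = n-1$. On the other side, $Q$ is an $(n-1)\times(n-1)$ matrix, so $|S|\le n-1$. By~\cite{sonawane2} its eigenvalues $-\phi^i\xi^{n-i}$, $i\in[n-1]$, are simple, so in fact $|S|=n-1$. For the comparison we only need the bound $|S|\le n-1$: the chain $n-1=|M|\le|S|\le n-1$ forces $M=S$. Thus the main eigenvalues of $\Gamma'(R_n)$ are exactly the eigenvalues of $Q[m,n]$.

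Finally, nonzeroness follows in either of two ways. One can cite~\cite{sonawane2} directly. Alternatively, one can use the explicit form $-\phi^i\xi^{n-i}$ with $\phi>0$ and $\xi=-(m-1)\phi^{-1}\neq 0$ for $m\ge 2$. An independent check is also available: $\det W(Q)\neq 0$ means $\mathbf{e}$ is a cyclic vector for $Q$, so the minimal polynomial of $Q$ has degree $n-1$. Hence $Q$ has $n-1$ distinct eigenvalues, which confirms $|S|=n-1$ without citation.

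There is no real obstacle here. The only point to get right is the counting step: the inclusion from Lemma~\ref{main eig of G is an eig of Quotient} must be paired with the upper bound $n-1$ on the number of distinct eigenvalues of $Q$, so that equality of cardinalities yields equality of sets.
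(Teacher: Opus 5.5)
Your argument is correct and is exactly the reasoning the paper compresses into ``an immediate consequence of Lemma~\ref{main eig of G is an eig of Quotient} and Theorem~\ref{thm:Q-rank}'', with nonzeroness supplied by \cite{sonawane2}; note that your chain $n-1=|M|\le|S|\le n-1$ by itself already forces $|S|=n-1$. One caution about your optional aside: a cyclic vector only makes $Q$ nonderogatory, so concluding that its eigenvalues are distinct also needs diagonalizability. That does hold here, since $\operatorname{diag}(|D'_i|^{1/2})\,Q\,\operatorname{diag}(|D'_i|^{-1/2})$ is symmetric, but the proof does not need this remark.
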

		Also, we get the following consequence from  Theorem \ref{spectS}, Corollary \ref{cor:main eigenvalues of ZDG are eigenvalue of P} and Corollary \ref{mainQcor}. 
		\begin{corollary}\label{non-main}
			The nonzero non-main eigenvalues of the graph $\Gamma(R_n)$ are precisely the negatives of the main eigenvalues of its subgraph $\Gamma'(R_n)$.   
		\end{corollary}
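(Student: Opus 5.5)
The plan is to identify the nonzero eigenvalues of $\Gamma(R_n)$ via Theorem~\ref{spectS}, separate out the main ones via Corollary~\ref{cor:main eigenvalues of ZDG are eigenvalue of P}, and then match what is left against Corollary~\ref{mainQcor}. By Theorem~\ref{spectS}, the distinct eigenvalues of $\Gamma(R_n)$ are the eigenvalues of $P[m,n]$, the eigenvalues $\phi^i\xi^{n-i}$ of $-Q[m,n]$ for $i\in[n-1]$, and possibly $0$. Sonawane showed that the eigenvalues of $P[m,n]$ and $Q[m,n]$ are nonzero. Hence the nonzero eigenvalues of $\Gamma(R_n)$ are exactly $\operatorname{spec}(P[m,n])\cup\operatorname{spec}(-Q[m,n])$.

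By Corollary~\ref{cor:main eigenvalues of ZDG are eigenvalue of P}, the main eigenvalues of $\Gamma(R_n)$ are precisely the eigenvalues of $P[m,n]$. It follows that the nonzero non-main eigenvalues are the nonzero eigenvalues not lying in $\operatorname{spec}(P[m,n])$, namely $\operatorname{spec}(-Q[m,n])\setminus\operatorname{spec}(P[m,n])$. At this point I will use Sonawane's result that the spectra of $P[m,n]$ and $-Q[m,n]$ are disjoint. (The introduction phrases this as the spectra of $P$ and $Q$ being disjoint, but the version needed here is the one for $-Q$, which is what makes the multiplicity count in Theorem~\ref{spectS} consistent.) With disjointness, the set of nonzero non-main eigenvalues is exactly $\operatorname{spec}(-Q[m,n])$.

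Finally, Corollary~\ref{mainQcor} says the main eigenvalues of $\Gamma'(R_n)$ are precisely the eigenvalues of $Q[m,n]$. Negating, the nonzero non-main eigenvalues of $\Gamma(R_n)$ are precisely the negatives of the main eigenvalues of $\Gamma'(R_n)$, as claimed.

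The only delicate point is the disjointness step: an eigenvalue of $-Q[m,n]$ must not coincide with an eigenvalue of $P[m,n]$, since otherwise it would be main. I will take this from Sonawane's result as quoted in the introduction, reading it as disjointness of the spectra of $P[m,n]$ and $-Q[m,n]$. This reading is forced by Theorem~\ref{spectS} itself, which asserts that each eigenvalue of $P[m,n]$ is simple in $\Gamma(R_n)$ while each $\phi^i\xi^{n-i}$ has its own multiplicity $\binom{n}{i}-1$. If that reading were doubted, I would check the sign structure directly. For $m\ge 2$ we have $\phi>0$ and $\xi<0$. The eigenvalues of $-Q[m,n]$ are $\phi^i\xi^{n-i}$, and one can compare their count and signs with those of $P[m,n]$, which is similar to a matrix of interlacing type. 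This fallback is less clean, so I rely on the cited result.
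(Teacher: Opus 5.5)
Your proposal is correct and is essentially the paper's argument, which simply combines Theorem~\ref{spectS} with Corollaries~\ref{cor:main eigenvalues of ZDG are eigenvalue of P} and~\ref{mainQcor}. You also make explicit a step the paper leaves implicit: the fact actually needed is that $\operatorname{spec}(P[m,n])$ and $\operatorname{spec}(-Q[m,n])$ are disjoint. Your reading is right, since the paper's own illustration shows that $P[2,4]$ and $Q[2,4]$ both have eigenvalue $-1$, so the introduction's literal ``$P$ and $Q$'' disjointness cannot be what is meant.
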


		We combine below  Theorem \ref{thm:P-rank} and Theorem \ref{thm:Q-rank}. 
		\begin{theorem}
			For integers $m, s \geq 2$ with $m$ a power of a prime number,  the zero-divisor graph $\Gamma(R_{s+1})$ has exactly $s$ main eigenvalues and its bipartite subgraph $\Gamma'(R_{s+1})$ also has exactly $s$ main eigenvalues.
		\end{theorem}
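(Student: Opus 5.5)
This statement is the specialization $n = s+1$ of Theorem~\ref{thm:P-rank} and Theorem~\ref{thm:Q-rank}. The plan is to check that the hypotheses of those theorems hold and then substitute.

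First, the setup. Let $m\ge 2$ be a prime power and $s\ge 2$. Then a finite field $\mathbb{F}_m$ of order $m$ exists, so $R_{s+1}=\mathbb{F}_m\times\cdots\times\mathbb{F}_m$ (with $s+1$ factors) is a well-defined finite reduced commutative ring. Moreover $n=s+1\ge 3>1$, so we are in the range where the equitable partitions $\mathcal{D}$ and $\mathcal{D}'$ and the quotient matrices $P[m,n]$ and $Q[m,n]$ are defined.

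Second, apply Theorem~\ref{thm:P-rank} with $n=s+1$: the graph $\Gamma(R_{s+1})$ has exactly $n-1=s$ main eigenvalues. Likewise, Theorem~\ref{thm:Q-rank} gives that $\Gamma'(R_{s+1})$ has exactly $s$ main eigenvalues.

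The only points that need checking are the nonvanishing facts used in those proofs, since they depend on $m$ rather than on $n$:
\begin{itemize}
\item $F_{m,k}\neq 0$ for all $k\ge 0$. With $F_{m,0}=F_{m,1}=1$ and $m-1\ge 1$, the recurrence $F_{m,k}=F_{m,k-1}+(m-1)F_{m,k-2}$ keeps every term a positive integer; this follows by a one-line induction.
\item $(1-m)^{r+1}\neq 0$, which holds because $m\ge 2$.
\end{itemize}
These are exactly the facts that make the Vandermonde factor $\det(V)$ and the diagonal factors $\det(D)$ nonzero. So nothing new is needed.

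As an optional remark, the graph $\Gamma(R_{s+1})$ is connected, since zero-divisor graphs are connected (Anderson--Livingston). Varying $m$ over the prime powers then gives the infinite family of simple connected graphs advertised in the introduction. There is no real obstacle here; the substantive work was already done in Lemmas~\ref{lem: entry of WP} and \ref{lem: entry of WQ} and in the Vandermonde argument.
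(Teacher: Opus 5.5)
Your proposal is correct and matches the paper, which obtains this theorem simply by setting $n=s+1$ in Theorems~\ref{thm:P-rank} and~\ref{thm:Q-rank}. Your re-check that $F_{m,k}>0$ and $(1-m)^{r+1}\neq 0$ is harmless but redundant, since those two theorems are already proved for all admissible $m$ and $n$.
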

		Thus, by varying the size $m$ of the field $\mathbb{F}_m$ in the above theorem,  we obtain an infinite family of zero-divisor graphs $\Gamma(R_{s+1})$  with exactly $s$ main eigenvalues. Moreover, their corresponding bipartite subgraphs also form an infinite family with the same property.
		
		\vskip.2cm
		\noindent
		\textbf{An Illustration:} 
		To illustrate the above results, we consider the case $m=2$ and $n=4$. The zero-divisor graph $\Gamma(\mathbb{Z}_2^4)$ and the corresponding induced bipartite subgraph $\Gamma'(\mathbb{Z}_2^4)$ are shown in Figure~\ref{fig:Z24}. The corresponding quotient matrices $P= P[2,4]$,  $Q= Q[2,4]$ and their associated walk matrices are given below
		\[
		\begin{array}{c@{\qquad}c}
			P=
			\begin{bmatrix}
				0 & 0 & 1 \\
				0 & 1 & 2 \\
				1 & 3 & 3
			\end{bmatrix},
			~
			W(P)=
			\begin{bmatrix}
				1 & 1 & 7 \\
				1 & 3 & 17 \\
				1 & 7 & 31
			\end{bmatrix},
			\text{~~and~~}
			Q=
			\begin{bmatrix}
				0 & 0 & 1 \\
				0 & 1 & 1 \\
				1 & 2 & 1
			\end{bmatrix},
			~
			W(Q)=
			\begin{bmatrix}
				1 & 1 & 4 \\
				1 & 2 & 6 \\
				1 & 4 & 9
			\end{bmatrix}
		\end{array}
		\]

		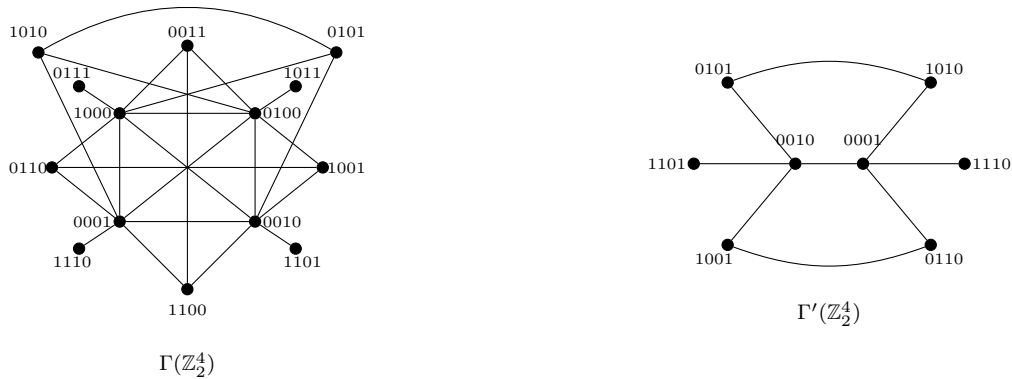
\begin{figure}[!ht]\label{fig:Z24}
			\centering
			
			\begin{minipage}{0.48\textwidth}
				\centering
				\begin{tikzpicture}[
					scale=0.9,
					every node/.style={circle, draw, fill=black, inner sep=1.5pt},
					lbl/.style={draw=none, fill=none, font=\tiny}
					]
					
					\node (1000) at (-1,0.8) {};
					\node (0100) at (1,0.8) {};
					\node (0010) at (1,-0.8) {};
					\node (0001) at (-1,-0.8) {};
					
					\node[lbl] at (-1.4,0.8) {$1000$};
					\node[lbl] at (1.4,0.8) {$0100$};
					\node[lbl] at (1.4,-0.8) {$0010$};
					\node[lbl] at (-1.4,-0.8) {$0001$};
					
					\node (0011) at (0,1.8) {};
					\node (1100) at (0,-1.8) {};
					
					\node[lbl] at (0,2) {$0011$};
					\node[lbl] at (0,-2.1) {$1100$};
					
					\node (1010) at (-2.2,1.7) {};
					\node (0101) at (2.2,1.7) {};
					\node (0110) at (-2,0) {};
					\node (1001) at (2,0) {};
					\node (1110) at (-1.6,-1.2) {};
					\node (1101) at (1.6,-1.2) {};
					\node (0111) at (-1.6,1.2) {};
					\node (1011) at (1.6,1.2) {};
					
					\node[lbl] at (-2.35,2) {$1010$};
					\node[lbl] at (2.35,2) {$0101$};
					
					\node[lbl] at (-2.35,0) {$0110$};
					\node[lbl] at (2.35,0) {$1001$};
					
					\node[lbl] at (-1.7,-1.4) {$1110$};
					\node[lbl] at (1.7,-1.4) {$1101$};
					
					\node[lbl] at (-1.7,1.4) {$0111$};
					\node[lbl] at (1.7,1.4) {$1011$};

					\draw (1000) -- (0100) -- (0010) -- (0001) -- (1000);
					\draw (1000) -- (0010);
					\draw (0100) -- (0001);
					\draw (0011) -- (1100);
					\draw (0110) -- (1001);
					
					\draw (0011) -- (1000);
					\draw (0011) -- (0100);
					\draw (1100) -- (0001);
					\draw (1100) -- (0010);
					
					\draw (1010) -- (0001);
					\draw (1010) -- (0100);
					\draw (0101) -- (0010);
					\draw (0101) -- (1000);
					\draw (0110) -- (1000);
					\draw (0110) -- (0001);
					\draw (1001) -- (0100);
					\draw (1001) -- (0010);
					\draw (1110) -- (0001);
					\draw (1101) -- (0010);
					\draw (0111) -- (1000);
					\draw (1011) -- (0100);
					
					\draw[bend left=30] (1010) to (0101);
					
				\end{tikzpicture}
				
				{\scriptsize $\Gamma(\mathbb{Z}_2^4)$}
			\end{minipage}
			\hspace{0.01\textwidth}
			\begin{minipage}{0.48\textwidth}
				\centering
				\begin{tikzpicture}[
					scale=0.9,
					yscale=1,
					every node/.style={circle, draw, fill=black, inner sep=1.5pt},
					lbl/.style={draw=none, fill=none, font=\tiny}
					]
					
					\node (0010) at (-0.5,0) {};
					\node (0001) at (0.5,0) {};
					
					\node[lbl] at (-0.5,0.35) {$0010$};
					\node[lbl] at (0.5,0.35) {$0001$};
					
					\node (1010) at (-1.5,1.2) {};
					\node (0101) at (1.5,1.2) {};
					
					\node[lbl] at (-1.7,1.4) {$0101$};
					\node[lbl] at (1.7,1.4) {$1010$};
					
					\node (1101) at (-2,0) {};
					\node (1110) at (2,0) {};
					
					\node[lbl] at (-2.4,0) {$1101$};
					\node[lbl] at (2.4,0) {$1110$};
					
					\node (0110) at (-1.5,-1.2) {};
					\node (1001) at (1.5,-1.2) {};
					
					\node[lbl] at (-1.7,-1.4) {$1001$};
					\node[lbl] at (1.7,-1.4) {$0110$};
					
					\draw (0010) -- (0001);
					
					\draw (1010) -- (0010);
					
					\draw (0101) -- (0001);
					
					\draw (0110) -- (0010);
					\draw (1001) -- (0001);
					
					\draw (1101) -- (0010);
					\draw (1110) -- (0001);
					
					\draw[bend left=20] (1010) to (0101);
					\draw[bend right=20] (0110) to (1001);
					
				\end{tikzpicture}
				
				{\scriptsize $\Gamma'(\mathbb{Z}_2^4)$}
			\end{minipage}
			
			\caption{Zero-divisor graph $\Gamma(\mathbb{Z}_2^4)$ and the bipartite subgraph $\Gamma'(\mathbb{Z}_2^4)$.}
			
		\end{figure}

		
		The matrices $W(P)$ and $W(Q)$ each has rank $ 3 = n-1.$ Hence, the graph $\Gamma(\mathbb{Z}_2^4)$ and its bipartite subgraph $\Gamma'(\mathbb{Z}_2^4)$ each has exactly $3$ main eigenvalues, as given below
		
		Main eigenvalues of $\Gamma(\mathbb{Z}_2^4)$  (eigenvalues of $P$):\quad $-1,$\, $\frac{5 -\sqrt{21}}{2},$\, $\frac{5 +\sqrt{21}}{2}$
		
		Main eigenvalues of $\Gamma'(\mathbb{Z}_2^4)$(eigenvalues of $Q$):\quad $-1,$\, $\frac{3 -\sqrt{5}}{2}$,\,  $\frac{3 +\sqrt{5}}{2}$.
		
		\section{Conclusion}
		\noindent
		In this paper, we present two infinite families of graphs arising from algebraic structures that have exactly \(s\) main eigenvalues for any given positive integer \(s\). In particular, we show that the zero-divisor graph of the ring $
		R_n := \mathbb{F}_m \times \mathbb{F}_m \times \cdots \times \mathbb{F}_m $
		(with \(n\) terms) has exactly  \(n\,- 1\) main eigenvalues. Moreover, we prove that the induced bipartite subgraph of \(\Gamma(R_n)\) also has exactly \(n-1\) main eigenvalues, and these are precisely the negatives of the nonzero non-main eigenvalues of the original graph.
		
		\section*{Acknowledgments}
		\noindent
		We thank Sujit Sakharam Damse and  S. A. Katre for their lectures in the lecture series on Matrix Analysis and Positivity at Savitribai Phule Pune University, which were helpful  in improving the exposition of our results.  The first author gratefully acknowledges the Department of Science and Technology (DST), Government of India, for the award of the INSPIRE Fellowship [IF220238].


\begin{thebibliography}{17}
			
			\bibitem{anderson} D. F. Anderson and P. S. Livingston, The zero-divisor graph of a commutative ring. \textit{J. Algebra}, 217 (1999), 434--447.
			
			\bibitem{beck} I. Beck, Coloring of commutative rings. \textit{J. Algebra}, 116 (1988), 208--226.
			
			\bibitem{cvetkovic1} D. Cvetkovi\'{c}, P. Rowlinson and S. K. Simi\'{c}, \textit{An Introduction to the Theory of Graph Spectra}, Cambridge Univ. Press, Cambridge, 2010.
			
			\bibitem{cvetkovic2} D. Cvetkovi\v{c}, The main part of the spectrum, divisors, and switching of graphs. \textit{Publ. Inst. Math. (Beograd)}, 23 (1978), 37--41.
			
			\bibitem{du1} Z. Du, F. Liu, S. Liu and Z. Qin, Graphs with $n-1$ main eigenvalues. \textit{Discrete Math.}, 344(7) (2021), 112397.
			
			\bibitem{du2} Z. Du, L. You and H. Liu, Almost controllable graphs and beyond. \textit{Discrete Math.}, 347(1) (2024), 113743.
			
			\bibitem{franca2026} F. França, A. Brondani and D. Jaume, Graphs with exactly three main eigenvalues. \textit{Discrete Appl. Math.}, 378 (2026), 49--64.
			
			\bibitem{godsil} C. Godsil, Controllable subsets in graphs. \textit{Ann. Comb.}, 16 (2012), 733--744.
			
			\bibitem{ghebleh} M. Ghebleh, S. Al-Yakoob, A. Kanso and D. Stevanovi{\'c}, Graphs having two main eigenvalues and arbitrarily many distinct vertex degrees. \textit{Appl. Math. Comput.}, 495 (2025), 129311.
			
			
			\bibitem{hagos} E. M. Hagos, Some results on graph spectra. \textit{Linear Algebra Appl.}, 356 (2002), 103--111.
			
			\bibitem{harary} F. Harary and A. J. Schwenk, The spectral approach to determining the number of walks in a graph. \textit{Pacific J. Math.}, 80(2) (1979), 443--449.
			
			\bibitem{hou1} Y. Hou and H. Zhou, Trees with exactly two main eigenvalues. \textit{J. Nat. Sci. Hunan Norm. Univ.}, 28 (2005).
			
			\bibitem{hou2} Y. Hou and F. Tian, Unicyclic graphs with exactly two main eigenvalues. \textit{Appl. Math. Lett.}, 19(11) (2006), 1143--1147.
			
			\bibitem{hou3} Y. Hou, Z. Tang and W. C. Shiu, Some results on graphs with exactly two main eigenvalues. \textit{Appl. Math. Lett.}, 25(10) (2012), 1274--1278.
			
			\bibitem{huang} X. Huang, Q. Huang and L. Lu, Construction of graphs with exactly $k$ main eigenvalues. \textit{Linear Algebra Appl.}, 486 (2015), 204--218.
			
			\bibitem{khare} A. Khare, \textit{Matrix Analysis and Entrywise Positivity Preservers}, Cambridge Univ. Press, 2022.
			
			\bibitem{kadu} G. S. Kadu, G. Sonawane and Y. M. Borse, Reciprocal eigenvalue properties using the zeta and Möbius functions. \textit{Linear Algebra Appl.}, 694 (2024), 186--205.
			
			\bibitem{koshy} T. Koshy, \textit{Fibonacci and Lucas Numbers with Applications}, Wiley, New York, 2001.
			
			\bibitem{lagrange1} J. D. LaGrange, Spectra of Boolean graphs and certain matrices of binomial coefficients. \textit{Int. Electron. J. Algebra}, 9 (2011), 78--84.
			
			\bibitem{lagrange2} J. D. LaGrange, Eigenvalues of Boolean graphs and Pascal-type matrices. \textit{Int. Electron. J. Algebra}, 13 (2013), 109--119.
			
			\bibitem{lagrange3} J. D. LaGrange, A combinatorial development of Fibonacci numbers in graph spectra. \textit{Linear Algebra Appl.}, 438(11) (2013), 4335--4347.
			
			\bibitem{lagrange4} J. D. LaGrange, Boolean rings and reciprocal eigenvalue properties. \textit{Linear Algebra Appl.}, 436(7) (2012), 1863--1871.
			
			\bibitem{rowlinson} P. Rowlinson, The main eigenvalues of a graph: a survey. \textit{Appl. Anal. Discrete Math.}, 1 (2007), 445--471.
			
			\bibitem{sonawane1} G. Sonawane, G. S. Kadu and Y. M. Borse, Spectra of zero-divisor graphs of finite reduced rings. \textit{J. Algebra Appl.}, 24(3) (2025), 2550082.
			
			\bibitem{sonawane2} G. Sonawane, Pascal-type matrices and eigenvalues of zero-divisor graphs of reduced rings. \textit{Linear Multilinear Algebra}, (2025), 1--15. 
			
		\end{thebibliography}
	\end{document}